\newtheorem{proposition}{Proposition}[section]
\newtheorem{theorem}[proposition]{Theorem}
\newtheorem{lemma}[proposition]{Lemma}
\newtheorem{corollary}[proposition]{Corollary}
\newtheorem{example}[proposition]{Example}
\newtheorem{algorithm}[proposition]{Algorithm}
\newtheorem{assumption}[proposition]{Assumption}
\newenvironment{proof}{{\noindent \bf Proof:}}{\hfill $\fbox{}$ \vspace*{5mm}}
\numberwithin{equation}{section}
\newcommand{\bx}{{\bf x}}
\newcommand{\ch}{\mathcal{H}}
\newcommand{\cm}{\mathcal{M}}
\newcommand{\ct}{\mathcal{T}}
\newcommand{\dist}{{\rm dist}}
\newcommand{\grad}{{\rm grad\,}}
\newcommand{\qf}{{\rm qf}}
\newcommand{\sym}{{\rm sym}}
\newcommand{\skw}{{\rm skew}}
\newcommand{\st}{{\rm St}}
\newcommand{\tr}{{\rm tr}}
\newcommand{\R}{{\mathbb R}}
\newcommand{\Rn}{{\mathbb R}^n}
\newcommand{\BE}{\begin{equation}}
\newcommand{\EE}{\end{equation}}
\newcommand{\normmm}[1]{{\vert\kern-0.25ex \vert\kern-0.25ex \vert #1
    \vert\kern-0.25ex \vert\kern-0.25ex\vert}}
\begin{document}

\title{ A Riemannian Derivative-Free Polak-Ribi\'ere-Polyak Method for Tangent Vector Field}
\author{ Teng-Teng Yao\thanks{Department of Mathematics, School of Sciences, Zhejiang University of Science and Technology, Hangzhou 310023, People's Republic of China (yaotengteng718@163.com). The research of this author is supported by the National Natural Science Foundation of China (No. 11701514).}
\and Zhi Zhao\thanks{Department of Mathematics, School of Sciences, Hangzhou Dianzi University, Hangzhou 310018, People's Republic of China (zhaozhi231@163.com). The research of this author is supported by the National Natural Science Foundation of China (No. 11601112).}
\and Zheng-Jian Bai\thanks{Corresponding author. School of Mathematical Sciences and Fujian Provincial Key Laboratory on Mathematical Modeling \& High Performance Scientific Computing,  Xiamen University, Xiamen 361005, People's Republic of China (zjbai@xmu.edu.cn). The research of this author is partially supported by the National Natural Science Foundation of China (No. 11671337), the Natural Science Foundation of Fujian Province of China (No. 2016J01035), and the Fundamental Research Funds for the Central Universities (No. 20720180008).}
\and Xiao-Qing Jin\thanks{Department of Mathematics, University of Macau, Macao, People's Republic of China (xqjin@umac.mo). The research of this author is supported by the research grant MYRG2016-00077-FST from University of Macau.}
 }
\maketitle

\begin{abstract}
This paper is concerned with the problem of finding a zero of a tangent vector field on a Riemannian manifold.
We first reformulate the problem as an equivalent Riemannian optimization problem. Then we propose a Riemannian derivative-free Polak-Ribi\'ere-Polyak method for solving the Riemannian optimization problem, where a non-monotone line search is employed. The global convergence of the proposed method is established under some mild assumptions. To further improve the efficiency, we also provide a hybrid method, which combines the proposed  geometric method with the Riemannian Newton method.  Finally, some numerical experiments are reported to illustrate the efficiency of the proposed method.

\end{abstract}

{\bf Keywords.}  Tangent vector field, Riemannian manifold, Polak-Ribi\'ere-Polyak method, non-monotone line search.

\vspace{3mm}
{\bf 2010 AMS subject classifications.} 65K05, 90C30, 90C56.

\section{Introduction}\label{sec1}
Let $\cm$ be a finite-dimensional Riemannian manifold and let $\langle \cdot, \cdot \rangle$  be the Riemannian metric on $\cm$ with its induced norm $\|\cdot \|$. Let $\nabla$ denote the Riemannian connection on $\cm$ induced by the Riemannian metric $\langle \cdot, \cdot \rangle$. Let $T_X\cm$ be the tangent space of $\cm$ at a point $X\in\cm$ and  $T\cm:=\cup_{X\in\cm}T_X\cm$ be the tangent bundle of $\cm$. In this paper, we aim to find a zero of  a continuously differentiable tangent vector field $F:\mathcal{M} \rightarrow T\cm$, i.e.,  find $X \in \mathcal{M}$ such that
\BE\label{prb}
F(X) = 0_X,
\EE
where $0_X$ is the zero tangent vector of $T_X\cm$.

Such smooth tangent vector fields arise in many applications such as geodesic convex optimizations on Riemannian manifolds where the gradients of the convex objective functions are geodesic monotone vector fields \cite{DFL00,NE99},  statistical principal component analysis where the Oja's flow leads to the Oja's vector field \cite{OJA82,OJA89}, the discretized Kohn-Sham (KS) total energy minimization in electronic structure calculations \cite{CD14, M04, SCS10}, and the trace ratio optimization in the linear discriminant analysis (LDA) for dimension reduction \cite{NB10, ZL14, ZL15} where the corresponding eigenvector-dependent nonlinear eigenvalue problems are smooth tangent vector fields, etc.

In particular, for multivalued monotone tangent vector fields on Hadamard manifolds, several proximal point algorithms have been proposed in \cite{FO02,LLM09,TH13,WLLY15,WLLY16}, where the  convergence analysis is investigated under some different assumptions.
However, these proximal point algorithms  are mainly restricted to finding zeros of monotone tangent vector fields.

For smooth tangent vector fields on general Riemannian manifolds, Riemannian Newton  method was widely studied (see for instance \cite{AMS08,ADM02,DPM03,LW05}). In  \cite[Section 6.1]{AMS08}, Absil et al. presented a geometric Newton method for solving  (\ref{prb}): Given current $X_k\in\cm$, solve the Riemannian Newton equation
\[
JF(X_k)[ \Delta X_k ] = -F(X_k)
\]
for $\Delta X_k\in T_{X_k}\cm$ and  set
\[
X_{k+1} = R_{X_k}(\Delta X_k),
\]
where $R$ is a retraction defined on $\cm$ \cite[Definition 4.1.1]{AMS08} and for $X\in\cm$, $R_{X}$ is the restriction of $R$ to $T_{X}\cm$. Here, $JF(X)$ denotes the Jacobian of $F$ at a point $X\in \cm$, which is a linear operator from $T_X\cm$ to  $T_X \cm$ defined by \cite[p.111]{AMS08}
\[
JF(X)[\xi_X]:=\nabla_{\xi_X}F,\quad\forall \xi_X\in T_X\cm.
\]
With respect to the Riemannian metric $\langle \cdot, \cdot \rangle$, the adjoint  $(JF(X))^*: T_X\cm \to  T_X \cm$ of $JF(X)$  is defined by
\[
\langle \xi_X, (JF(X))^*[\eta_X] \rangle=\langle JF(X)[\xi_X], \eta_X \rangle,\quad \forall \xi_X,\eta_X\in T_X\cm.
\]

The quadratic convergence of the Riemannian Newton method was established under the nonsingularity assumption of the Jacobian of $F$ at a solution point \cite[Theorem 6.3.2]{AMS08}.
In \cite{AILH09}, Absil et al. also proposed  a geometric  Newton method for finding a zero of Oja's vector field.

The advatange of a geometric  Newton method lies in its quadratic convergence. However, it is often computationally costly to solve the Riemannian Newton equation, especially when the Jacobian is ill-conditioned. In the case of  large-scale problems, the Jacobians of some tangent vector fields
(e.g., monotone tangent vector fields on Hadamard manifolds) may not be easily available. Finally, the convergence of the Riemannian Newton method also depends on the starting point.
Therefore, it is indispensable to find an efficient globally convergent Jacobian-free method for solving (\ref{prb}), especially for large-scale problems.

In recent years, some derivative-free optimization methods have been proposed for solving nonlinear systems of equations of the form of $G(\bx)={\bf 0}$ defined on Euclidean spaces \cite{CXH09,CMR06,CR03,FN17,ML14,Yu10,Yu11}, where $G:\Rn\to\Rn$ is a continuously differentiable mapping.  These methods use $\pm G(\bx_k)$ at the current iterate $\bx_k$ as a search direction and their global convergence are guaranteed by using some non-monotone  line search techniques.
These methods need not to form the Jacabian matrices and require a small storage space and thus are applicable to solving large-scale nonlinear systems of equations. Sparked by this, in this paper, we propose a Riemannian Derivative-Free Polak-Ribi\'ere-Polyak (PRP) conjugate gradient method for solving (\ref{prb}). The global convergence is established under some assumptions. We apply  the proposed method to finding zeros of Oja's vector fields, the tangent vector field corresponding to the trace ratio optimization problem, and monotone tangent vector fields on Hadamard manifolds accordingly. Finally, we combine the  proposed method with the Riemannian Newton method to get a solution of high accuracy.

The remaining part of this paper is organized as follows. In section \ref{sec2}, we present a  Riemannian derivative-free PRP conjugate gradient method for solving (\ref{prb}). In section \ref{sec3},
we give  the global convergence of the proposed method under some basic assumptions. In section \ref{sec4},  the proposed method is used to find zeros of tangent vector fields for some practical applications. In section \ref{sec5}, we present a hybrid method. Finally some concluding remarks are given in section \ref{sec6}.

\section{A Riemannian Derivative-free Polak-Ribi\'ere-Polyak Method}\label{sec2}
We first recall the Riemannian nonlinear conjugate gradient method for solving the following optimization problem
\BE\label{g}
\begin{array}{cc}
\min & \displaystyle g(Z)\\[2mm]
\mbox{subject to (s.t.)} & Z\in \cm,
\end{array}
\EE
where $g:\cm\to \R$ is a continuously differentiable function. A nonlinear conjugate gradient method aims to update the current iterate $Z_k\in\cm$ by
\BE\label{NONCG}
Z_{k+1}=R_{Z_k}(\alpha_k\Delta Z_k),
\EE
where the step length $\alpha_k$ is determined by a line search. The search direction $\Delta Z_k\in T_{Z_k}\cm$ is given by
\BE\label{NONCOMBINATOIN}
\Delta Z_k=\left\{
\begin{array}{ll}
-\grad g(Z_k), &\mbox{if $k=0$},\\
-\grad g(Z_k)+\beta_k\ct_{\alpha_{k-1}\Delta Z_{k-1}} \Delta Z_{k-1}, &\mbox{if $k\ge 1$},
\end{array}
\right.
\EE
where $\beta_k$ is a scalar, $\grad g(Z_k)$ is the  Riemannian gradient of $g$ at the point $Z_k$, and $\ct$ is a vector transport associated with the retraction $R$ \cite[Definition 8.1.1]{AMS08}.  In particular, for the Riemannian PRP method in  \cite[p.182]{AMS08},  the parameter $\beta_k$ is given by
\BE\label{NONCONEFFICIENTS}
\beta_{k}=\frac{\langle \grad g(Z_k), \grad g(Z_{k}) - \ct_{\alpha_{k-1}\Delta Z_{k-1}} \grad g(Z_{k-1}) \rangle} {\|{\rm grad}\; g(Z_{k-1})\|^{2}}.
\EE
For more  Riemannian nonlinear conjugate gradient methods, one may refer to \cite{AMS08,EAS98,RW12,SATOIWAI15,YBZC16,ZJB16,Z17}.
However, for any Riemannian nonlinear conjugate gradient method for solving problem (\ref{g}),
the Riemannian gradient of $g$ is needed.

To solve (\ref{prb}), it is natural to consider the  following minimization problem
\BE\label{fF}
\begin{array}{cc}
\min & \displaystyle f(X):=\frac{1}{2}\|F(X)\|^{2}\\[2mm]
\mbox{s.t.} & X\in \cm.
\end{array}
\EE
Since $F:\cm \to T\cm$ is continuously differentiable, the function $f:\cm \to \R$ is also continuously differentiable. By the definition of Riemannian gradient and using the compatibility of Riemannian connection $\nabla$ with the Riemannian metric $\langle \cdot, \cdot \rangle$, we have
\begin{eqnarray*}
{\rm D}f(X)[\xi_X] &=& \frac{1}{2}\big(\langle \nabla_{\xi_X}F, F(X) \rangle + \langle F(X),
\nabla_{\xi_X}F \rangle\big)\\
&=& \langle \nabla_{\xi_X}F, F(X) \rangle = \langle JF(X)[\xi_X], F(X) \rangle\\
&=& \langle \xi_X, (JF(X))^*[F(X)] \rangle = \langle \xi_X, {\rm grad}f(X)\rangle,
\end{eqnarray*}
for all $\xi_X\in T_X\cm$. Thus,
\BE\label{GRAVEC}
{\rm grad}f(X) = (JF(X))^*[F(X)].
\EE

In order to apply the Riemannian PPR method determined by (\ref{NONCG}), (\ref{NONCOMBINATOIN}), and (\ref{NONCONEFFICIENTS}) for solving problem (\ref{fF}), we need the  Riemannian gradient of $f$.
By using (\ref{GRAVEC}), to calculate the Riemannian gradient of $f$ at the current iterate $X_k$, we need to compute the adjoint of the Jacobian of $F$ at $X_k$. If the Jacobian of $F$ is not available or numerically expensive to calculate, then it is unsuitable  to  directly apply a Riemannian nonlinear conjugate gradient method to problem (\ref{fF}).

In the following, we propose a derivative-free PRP method for solving  (\ref{prb}). This is motivated by the derivative-free PRP method for solving a large-scale nonlinear system of equations of the form $G(\bx)={\bf 0}$ with  $G:\Rn\to\Rn$ being continuously differentiable  \cite{ML14}, where the search direction uses $G(\bx_k)$ and $G(\bx_{k-1})$ at the current iterate $\bx_k$ and the previous iterate $\bx_{k-1}$ and a non-monotone line search is used. In particular, we use the PRP method defined by (\ref{NONCG}), (\ref{NONCOMBINATOIN}), and (\ref{NONCONEFFICIENTS}) to  problem (\ref{fF}), where the Riemannian gradients of $f$ at the current iterate $X_k$ is replaced by the values of the tangent vector field $F$ at $X_k$, and a Riemannian nonmonotnoe line search is employed.
We now describe a Riemannian derivative-free PRP algorithm for solving (\ref{prb})  as follows.

\begin{algorithm}\label{RDF-PRP}
{\rm (A Riemannian derivative-free PRP method (RDF-PRP))}
\begin{description}
\item [{\rm Step 0.}]
Choose an initial point $ X_{0} \in \mathcal{M}$, $\bar{\epsilon}>0$, $t_1,t_2>0$, $0<\rho<1$, $0<\lambda_{\min}<\lambda_{\max}<1$, $0<\alpha_{\min}\le\alpha\le\alpha_{\max}$. Let $k:=0$, $\Gamma_{0}:=f(X_{0})$, $\Phi_{0}:=1$.
Select a positive sequence $\{\delta_{k}\}$ such that
\BE\label{n}
\sum_{k=0}^{\infty}\delta_{k}=\delta<\infty.
\EE

\item [{\rm Step 1.}] If $\|F(X_{k})\|\le\bar{\epsilon}$, then stop. Otherwise, go to {\rm Step 2}.

\item [{\rm Step 2.}] Set
\BE\label{CONJUG}
\Delta X_{k}:=\left\{
\begin{array}{ll}
- F(X_{0}) & \mbox{if $k=0$},\\[2mm]
- F(X_{k}) + \beta_{k}\ct_{\Delta Z_{k-1}}\Delta X_{k-1}, & \mbox{if $k\ge 1$},
\end{array}
\right.
\EE
where
\BE\label{def:betak}
\beta_{k}  :=  \frac{\langle F(X_{k}), Y_{k-1} \rangle}
{ \|F(X_{k-1}) \|^2 }, \quad
 Y_{k-1} := F(X_{k})-\ct_{\Delta Z_{k-1}}F( X_{k-1}).
\EE
\item [{\rm Step 3.}]  Determine $\alpha_k = \max \{ \alpha\rho^j, j = 0,1,2,\ldots \}$ such that \\
if
\BE\label{BACKTR1}
f( R_{X_k}(\alpha_k \Delta X_k) )\leq \Gamma_{k}+\delta_{k}-t_1\alpha_k^2\|\Delta X_k\|^{2}-t_2\alpha_k^2f(X_k),
\EE
then set
\BE\label{RER1}
\Delta Z_k := \alpha_k \Delta X_k, \quad X_{k+1} := R_{X_{k}} (\Delta Z_k).
\EE
Else if
\BE\label{BACKTR2}
f( R_{X_k}(-\alpha_k \Delta X_k) )\leq \Gamma_{k}+\delta_{k}-t_1\alpha_k^2\|\Delta X_k\|^{2}-t_2\alpha_k^2f(X_k),
\EE
then set
\BE\label{RER2}
\Delta Z_k := -\alpha_k \Delta X_k, \quad X_{k+1} := R_{X_{k}} (\Delta Z_k).
\EE

\item [{\rm Step 4.}] Choose $\lambda_{k}\in[\lambda_{\min},\lambda_{\max}]$ and compute
\BE\label{QC}
\Phi_{k+1}=\lambda_{k}\Phi_{k}+1, \quad \Gamma_{k+1}=\frac{\lambda_{k}\Phi_{k}(\Gamma_{k}+\eta_{k})+f(X_{k+1})}{\Phi_{k+1}}.
\EE

\item [{\rm Step 5.}] Replace $k$ by $k+1$ and go to {\rm Step 1}.
\end{description}
\end{algorithm}

We point out that the non-monotone line search in Step 3 of Algorithm {\rm \ref{RDF-PRP}} can be seen as a generalization of that in \cite{CL09,ML14}. Let
\[
\Lambda_{k}:=\frac{\sum_{j=1}^{k}(f(X_{k})+j\delta_{j-1})}{k+1} \quad\mbox{and}\quad \delta_{-1} = 0.
\]
By following the similar proof of \cite[Lemma 2.2]{CL09}, for any choice of $\lambda_{k}\in[0,1]$, we have for all $k\geq 0$ that
\BE\label{PINEQUALITY}
f(X_{k}) \leq \Gamma_{k} \leq \Lambda_{k}, \quad \Gamma_{k+1} \leq \Gamma_{k} + \delta_k.
\EE
Then  condition (\ref{BACKTR1}) or (\ref{BACKTR2}) holds for some $\alpha_{k}$. This shows that the line search step in Algorithm \ref{RDF-PRP} is well-defined.

\section{Convergence Analysis}\label{sec3}

In this section, we establish the global convergence of Algorithm \ref{RDF-PRP}.
To facilitate the analysis, we  define the pullback $\widehat{f}: T\cm\to \R$ of $f:\cm \to \R$ through $R$
 by \cite[p.55]{AMS08}
\[
\hat{f} (\xi):= f(R(\xi)),\quad \forall \xi\in T\cm.
\]
For $X\in\cm$, let $\widehat{f}_X$ denote the restriction of $\widehat{f}$ to $T_X \cm$, i.e.,
\[
\hat{f}_X(\xi_X):= f(R_X(\xi_X)),\quad \forall \xi_X\in T_X\cm.
\]
We also need the following assumptions.

\begin{assumption}\label{ASSUM}
\begin{enumerate}
\item The level set $\Omega:=\{X\in \cm \; | \; f(X)\leq f(X_{0})+\delta\}$ is bounded, where $\delta$ is a constant
defined by {\rm (\ref{n})}.
\item In some neighborhood $V$ of $\Omega$, $F$ is continuously differentiable and is Lipschitz continuous
with respect to the vector transport $\ct$,
i.e., there is a constant $L > 0$ such that
\BE\label{bound}
\| F(R_X(\xi_X)) - \ct_{\xi_X}F(X) \| \leq L \cdot{\rm dist}(X,R_X(\xi_X)),
\EE
for all $X\in V$ and $\xi_X\in T_X\cm$ with $R_X(\xi_X)\in V$.
\item The vector transport $\ct$ is bounded, i.e., there exists a constant $C>0$ such that
\BE\label{TRANSPORT}
\| \ct_{\eta_X}\xi_X \| \leq C\cdot \|\xi_X\|,
\EE
for all $X\in \cm$ and $\xi_X,\eta_X\in T_X\cm$.
\end{enumerate}
\end{assumption}
Under Assumption {\rm \ref{ASSUM}}, the tangent vector field $F$ is bounded on $\Omega$, i.e.,
there exists a constant $\tau_{1}>0$ such that
\BE\label{Frx}
\|F(X)\|\leq\tau_{1}, \quad \forall X\in\Omega.
\EE
By using the continuity of $f$ and  Assumption {\rm \ref{ASSUM}}, it is easy to see that
the level set $\Omega$ is closed  and bounded and  thus $\Omega$ is a compact subset of $\cm$.
According to Corollary 7.4.6 in \cite{AMS08}, there exist
two scalars $\nu >0$ and $\mu >0$ such that
\BE\label{retr:bd}
\nu\| \xi_X \| \geq  {\rm dist}\big(X,R_X (\xi_X) \big),
\EE
for $X\in \Omega$ and $\xi_X\in T_X\cm$ with $\|\xi_X\| \leq \mu$.
If the vector tansport $\ct$ is chosen as the parallel translation, then the inequality in (\ref{TRANSPORT}) holds as an equality with $C=1$.
Specially, if $\cm$ is an embedded Riemannian submanifold of a Euclidean space and $\ct$ is defined through orthogonal projection
by the formula (8.10) in \cite[p.174]{AMS08}, then the inequality in (\ref{TRANSPORT}) holds with $C=1$.

To establish the global convergence Algorithm {\rm \ref{RDF-PRP}}, we need the following preliminary lemma whose proof is similar to that of Lemma 3.2 and Lemma 3.3 in \cite{ML14}, and thus we omit it here.
\begin{lemma}\label{lem:2}
Suppose  Assumption {\rm \ref{ASSUM}} is satisfied. Then the sequence $\{X_{k}\}$ generated by Algorithm {\rm \ref{RDF-PRP}}
is contained in $\Omega$. In addition, we have
\[
\lim\limits_{k \to \infty} \alpha_{k}\|\Delta X_{k}\|=0 \quad and \quad \lim\limits_{k \to \infty} \alpha_{k}^{2}f( X_{k})=0.
\]
\end{lemma}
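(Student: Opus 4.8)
The plan is to establish the two limits by exploiting the non-monotone line search acceptance condition together with the summability of $\{\delta_k\}$ and the telescoping properties of $\Gamma_k$. The key observation is that at each step, the accepted direction (either $+\alpha_k\Delta X_k$ or $-\alpha_k\Delta X_k$) satisfies a sufficient-decrease inequality of the form
\[
f(X_{k+1}) \leq \Gamma_k + \delta_k - t_1\alpha_k^2\|\Delta X_k\|^2 - t_2\alpha_k^2 f(X_k).
\]
First I would confirm the containment $\{X_k\}\subset\Omega$ by induction: using $f(X_0)\le f(X_0)+\delta$ for the base case, and for the inductive step combining $f(X_{k+1})\le\Gamma_k+\delta_k$ (which follows from dropping the two nonpositive penalty terms) with the bound $\Gamma_k\le\Lambda_k$ and the summability $\sum\delta_k=\delta$ from (\ref{PINEQUALITY}) and (\ref{n}). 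This keeps the iterates in the compact set $\Omega$, which in turn guarantees the boundedness (\ref{Frx}) of $F$ and hence of $f$ along the sequence.

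Next I would telescope the decrease. Rewriting the acceptance inequality using the recursion (\ref{QC}) for $\Gamma_{k+1}$, one can bound the weighted sum $\sum_k\big(t_1\alpha_k^2\|\Delta X_k\|^2 + t_2\alpha_k^2 f(X_k)\big)$ in terms of $\Gamma_0$, the limit of the $\Gamma_k$ sequence, and $\sum_k\delta_k=\delta<\infty$. The point is that $\{\Gamma_k\}$ is bounded below by $0$ (since $f\ge 0$ and $f(X_k)\le\Gamma_k$) and satisfies $\Gamma_{k+1}\le\Gamma_k+\delta_k$, so $\{\Gamma_k\}$ converges; combined with the summability of $\{\delta_k\}$, the accumulated penalty terms must form a convergent series. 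Since both penalty terms $t_1\alpha_k^2\|\Delta X_k\|^2$ and $t_2\alpha_k^2 f(X_k)$ are nonnegative, their individual series converge, forcing each general term to tend to $0$. This yields
\[
\lim_{k\to\infty}\alpha_k^2\|\Delta X_k\|^2=0 \quad\text{and}\quad \lim_{k\to\infty}\alpha_k^2 f(X_k)=0,
\]
and the first of these gives $\lim_{k\to\infty}\alpha_k\|\Delta X_k\|=0$ after taking square roots.

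The main obstacle I anticipate is the careful bookkeeping connecting the acceptance inequality to the $\Gamma_k$ recursion: one must verify that summing (\ref{BACKTR1})/(\ref{BACKTR2}) over $k$ is legitimate despite the non-monotone, weighted-average structure of $\Gamma_k$ in (\ref{QC}), rather than a plain monotone decrease. The weights $\lambda_k\Phi_k/\Phi_{k+1}$ evolving through Step 4 must be controlled so that the telescoping argument does not lose the accumulated decrease; this is precisely where the bounds $\lambda_{\min}\le\lambda_k\le\lambda_{\max}<1$ and the properties in (\ref{PINEQUALITY}) are used. Since the paper explicitly notes that this argument mirrors the proofs of Lemma 3.2 and Lemma 3.3 in \cite{ML14}, I expect the resolution to follow by adapting those Euclidean arguments to the Riemannian setting, where the retraction $R$ and vector transport $\ct$ replace the ambient linear operations but the summability and compactness machinery carries over unchanged.
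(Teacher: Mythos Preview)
Your proposal is correct and matches the argument the paper defers to: the paper omits the proof entirely, citing Lemmas~3.2 and~3.3 of \cite{ML14}, and your sketch reproduces precisely that machinery (induction via $f(X_k)\le\Gamma_k\le f(X_0)+\delta$ for containment in $\Omega$, then combining the acceptance inequality with the recursion~(\ref{QC}) to get a summable series of penalty terms, using $\lambda_{\max}<1$ to bound $\Phi_{k+1}$ uniformly so the $1/\Phi_{k+1}$ weights can be removed). The obstacle you flag about the weighted-average bookkeeping is real but resolves exactly as you anticipate.
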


For the search directions $\{\Delta X_{k}\}$ generated by Algorithm {\rm \ref{RDF-PRP}}, we have the following result. The proof can be can seen as a generalization of  \cite[Lemma 3.4]{ML14}.
\begin{lemma}\label{lem:4}
Suppose Assumption {\rm \ref{ASSUM}} is satisfied and Algorithm {\rm \ref{RDF-PRP}} generates infinite sequences $\{X_{k}\}$ and $\{\Delta X_{k}\}$.  If the sequence $\{\|F(X_k)\|\}$ is bounded below by a constant $\tau>0$, i.e.,
\BE\label{fr}
\|F(X_{k})\|\geq\tau,\quad \forall k\geq0,
\EE
then there exists a constant $T > 0$ such that
\BE\label{xb}
\|\Delta X_{k}\|\leq T, \quad \forall k\geq0,
\EE
and
\BE\label{bex}
\lim\limits_{k\rightarrow\infty}\beta_{k}\|\Delta X_{k-1}\|=0, \quad \forall k\geq0.
\EE
\end{lemma}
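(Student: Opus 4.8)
The plan is to bound the two terms in the conjugate gradient recursion~(\ref{CONJUG}) separately, using the lower bound~(\ref{fr}) on $\|F(X_k)\|$ together with the boundedness properties from Assumption~\ref{ASSUM} and the conclusions of Lemma~\ref{lem:2}. First I would establish an upper bound on $|\beta_k|$. From the definition~(\ref{def:betak}),
\[
|\beta_k| = \frac{|\langle F(X_k), Y_{k-1}\rangle|}{\|F(X_{k-1})\|^2}
\le \frac{\|F(X_k)\|\,\|Y_{k-1}\|}{\|F(X_{k-1})\|^2},
\]
so I must control $\|Y_{k-1}\| = \|F(X_k) - \ct_{\Delta Z_{k-1}}F(X_{k-1})\|$. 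Since $X_{k+1} = R_{X_k}(\Delta Z_k)$ with $\Delta Z_{k-1} = \pm\alpha_{k-1}\Delta X_{k-1}$, the Lipschitz condition~(\ref{bound}) combined with the retraction bound~(\ref{retr:bd}) yields $\|Y_{k-1}\| \le L\cdot\dist(X_{k-1},X_k)\le L\nu\|\Delta Z_{k-1}\| = L\nu\,\alpha_{k-1}\|\Delta X_{k-1}\|$ (once $\alpha_{k-1}\|\Delta X_{k-1}\|$ is small enough that the norm constraint $\|\xi_X\|\le\mu$ applies, which holds eventually by Lemma~\ref{lem:2}). Using the upper bound $\|F(X_k)\|\le\tau_1$ from~(\ref{Frx}) and the lower bound $\|F(X_{k-1})\|\ge\tau$ from~(\ref{fr}), I obtain
\[
|\beta_k| \le \frac{\tau_1 L\nu}{\tau^2}\,\alpha_{k-1}\|\Delta X_{k-1}\|.
\]

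Next I would prove~(\ref{bex}) and~(\ref{xb}) together, anticipating a mutual dependence. The estimate just derived gives
\[
\beta_k\|\Delta X_{k-1}\| \le \frac{\tau_1 L\nu}{\tau^2}\,\alpha_{k-1}\|\Delta X_{k-1}\|^2,
\]
but this still contains $\|\Delta X_{k-1}\|$, so I cannot immediately conclude the limit is zero without first securing the uniform bound~(\ref{xb}). The cleanest route is to bound $\|\Delta X_k\|$ inductively from~(\ref{CONJUG}): using~(\ref{TRANSPORT}) and the fact that $\Delta Z_{k-1}=\pm\alpha_{k-1}\Delta X_{k-1}$,
\[
\|\Delta X_k\| \le \|F(X_k)\| + |\beta_k|\,\|\ct_{\Delta Z_{k-1}}\Delta X_{k-1}\|
\le \tau_1 + C|\beta_k|\,\|\Delta X_{k-1}\|
\le \tau_1 + \frac{C\tau_1 L\nu}{\tau^2}\,\alpha_{k-1}\|\Delta X_{k-1}\|^2.
\]
The key observation is that Lemma~\ref{lem:2} gives $\alpha_{k-1}\|\Delta X_{k-1}\|\to 0$, so the coefficient $\frac{C\tau_1 L\nu}{\tau^2}\,\alpha_{k-1}\|\Delta X_{k-1}\|$ multiplying $\|\Delta X_{k-1}\|$ is eventually less than, say, $\tfrac12$. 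For $k$ large enough this gives $\|\Delta X_k\|\le \tau_1 + \tfrac12\|\Delta X_{k-1}\|$, a contraction-type recursion whose solution is bounded by $2\tau_1$ in the limit; absorbing the finitely many early terms into a constant yields the uniform bound~(\ref{xb}).

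Finally, with~(\ref{xb}) in hand, I return to $\beta_k\|\Delta X_{k-1}\| \le \frac{\tau_1 L\nu}{\tau^2}\,\alpha_{k-1}\|\Delta X_{k-1}\|^2 \le \frac{\tau_1 L\nu T}{\tau^2}\,\alpha_{k-1}\|\Delta X_{k-1}\|$, and since $\alpha_{k-1}\|\Delta X_{k-1}\|\to 0$ by Lemma~\ref{lem:2}, the right-hand side tends to zero, establishing~(\ref{bex}). I expect the main obstacle to be handling the circular dependence between~(\ref{xb}) and~(\ref{bex}) rigorously: the bound on $\|\Delta X_k\|$ depends on $\beta_k\|\Delta X_{k-1}\|$, which in turn one would naively bound using a bound on $\|\Delta X_{k-1}\|$. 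The resolution is to exploit that the cross-term carries the vanishing factor $\alpha_{k-1}\|\Delta X_{k-1}\|$ from Lemma~\ref{lem:2}, which makes the recursion for $\|\Delta X_k\|$ eventually contractive rather than merely additive; care must also be taken that the smallness conditions ($\|\Delta Z_{k-1}\|\le\mu$ needed for~(\ref{retr:bd})) hold for all sufficiently large $k$, with the finitely many exceptional indices absorbed into the constant $T$.
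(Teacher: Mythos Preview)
Your proposal is correct and follows essentially the same route as the paper: bound $\|Y_{k-1}\|$ via the Lipschitz and retraction estimates, feed this into the recursion for $\|\Delta X_k\|$ to obtain a contraction once $\alpha_{k-1}\|\Delta X_{k-1}\|$ is small (by Lemma~\ref{lem:2}), absorb the finitely many early indices into $T$, and then deduce~(\ref{bex}) from~(\ref{xb}) and Lemma~\ref{lem:2}. The paper organizes the argument by proving~(\ref{xb}) first and~(\ref{bex}) second, but your handling of the mutual dependence and the eventual smallness conditions matches the paper's logic.
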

\begin{proof}
We first prove  (\ref{xb}). From (\ref{def:betak}), (\ref{RER1}), (\ref{RER2}), (\ref{bound}), and (\ref{retr:bd}) it follows that for all $k$ sufficiently large,
\begin{eqnarray}\label{y1}
\|Y_{k-1}\|&=&\|F(X_{k})-\ct_{\Delta Z_{k-1}}F(X_{k-1})\|  \nonumber\\
&=&\|F(R_{X_{k-1}}(\Delta Z_{k-1}))-\ct_{\Delta Z_{k-1}}F(X_{k-1})\| \nonumber\\
&\leq&L\cdot \dist(X_{k},X_{k-1}) =L \cdot \dist(R_{X_{k-1}}(\Delta Z_{k-1}),X_{k-1})\nonumber\\
&\leq&L\nu\|\Delta Z_{k-1}\| = L\nu\alpha_{k-1}\|\Delta X_{k-1}\|.
\end{eqnarray}
It follows from (\ref{CONJUG}), (\ref{def:betak}), (\ref{TRANSPORT}), (\ref{Frx}), and (\ref{y1}) that for all $k$ sufficiently large,
\begin{eqnarray}\label{Pflm:1}
\|\Delta X_{k}\|&=& \|-F(X_{k})+\frac{\langle F(X_{k}),Y_{k-1}\rangle}{\|F(X_{k-1})\|^{2}}\ct_{\Delta Z_{k-1}}\Delta X_{k-1}\| \nonumber\\
&\leq& \|F(X_{k})\|+\frac{\|F(X_{k})\|\cdot\|Y_{k-1}\|}{\|F(X_{k-1})\|^{2}}\|\ct_{\Delta Z_{k-1}}\Delta X_{k-1}\|  \nonumber\\
&\leq&\tau_{1}+\frac{\tau_{1}CL\nu\alpha_{k-1}\|\Delta X_{k-1}\|}{\tau^{2}}\|\Delta X_{k-1}\|.
\end{eqnarray}
By Lemma \ref{lem:2}, for any constant $\pi\in(0,1)$, there exists an index $k_{0}>0$ such that
\[
\frac{\tau_{1}CL\nu\alpha_{k-1}\|\Delta X_{k-1}\|}{\tau^{2}}<\pi,\quad \forall k>k_{0}.
\]
This, together with  (\ref{Pflm:1}), yields for all $k>k_{0}$,
\begin{eqnarray*}
\|\Delta X_{k}\|&\leq&\tau_{1}+\pi\|\Delta X_{k-1}\|\\
&\leq&\tau_{1}(1+\pi+\pi^{2}+\cdots+\pi^{k-k_{0}+1})+\pi^{k-k_{0}}\|\Delta X_{k_{0}}\|\\
&\leq& \frac{\tau_{1}}{1-\pi}+\|\Delta X_{k_{0}}\|.
\end{eqnarray*}
Hence, (\ref{xb}) holds by setting $T :=\max \{\|\Delta X_{1}\|,\|\Delta X_{2}\|,\ldots,\|\Delta X_{k_{0}}\|,\frac{\tau_{1}}{1-\pi}+\|\Delta X_{k_{0}}\| \}$.

Next, we prove  (\ref{bex}). By using Lemma \ref{lem:2}, (\ref{bound}), (\ref{fr}), (\ref{xb}), and (\ref{y1})  we have for all $k$ sufficiently large that
\begin{eqnarray*}
|\beta_{k}|\|\Delta X_{k-1}\|&=& \frac{|\langle F(X_{k}),Y_{k-1}\rangle|}{\|F(X_{k-1})\|^{2}}\|\Delta X_{k-1}\|\\
&\leq& \frac{\| F(X_{k})\|\cdot \|Y_{k-1}\|}{\|F(X_{k-1})\|^{2}}\|\Delta X_{k-1}\|
\leq \frac{\|F(X_{k})\|L\nu\alpha_{k-1}\|\Delta X_{k-1}\|}{\|F(X_{k-1})\|^{2}}\|\Delta X_{k-1}\| \\
&\leq& L\nu\frac{\|F(X_{k})\|}{\|F(X_{k-1})\|^{2}}\alpha_{k-1}\|\Delta X_{k-1}\|^{2}
\leq LT\nu\frac{\tau_{1}}{\tau^{2}}\cdot\alpha_{k-1}\|\Delta X_{k-1}\|.
\end{eqnarray*}
This, together with Lemma \ref{lem:2}, yields (\ref{bex}).
\end{proof}

On the global convergence of Algorithm \ref{RDF-PRP},  we have  the following theorem.
The proof is a generalization of Theorem 3.5 in \cite{ML14} and Theorem 1 in \cite{CMR06}.
\begin{theorem}\label{Thm:1}
Suppose  Assumption {\rm \ref{ASSUM}} is satisfied and Algorithm {\rm \ref{RDF-PRP}} generates an infinite sequence $\{X_{k}\}$. Then we have
\[
\liminf \limits_{k \to \infty} \|  F(X_{k}) \| = 0
\]
or for any accumulation point $X_{*}$  of $\{X_{k}\}$
\BE\label{thm:2}
\langle JF(X_*)[F(X_*)], F(X_*) \rangle  = 0.
\EE
\end{theorem}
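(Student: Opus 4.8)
The plan is to prove the statement as a dichotomy: if the first alternative fails, i.e.\ $\liminf_{k\to\infty}\|F(X_k)\|\neq 0$, then I will derive (\ref{thm:2}). So assume there is $\tau>0$ with $\|F(X_k)\|\ge\tau$ for all large $k$. This is precisely hypothesis (\ref{fr}) of Lemma \ref{lem:4}, which then gives me the two facts I need: $\|\Delta X_k\|\le T$ for all $k$, and $\beta_k\|\Delta X_{k-1}\|\to 0$. Feeding the latter (together with the transport bound (\ref{TRANSPORT})) into the recursion (\ref{CONJUG}) yields
\[
\|\Delta X_k+F(X_k)\|=|\beta_k|\,\|\ct_{\Delta Z_{k-1}}\Delta X_{k-1}\|\le C|\beta_k|\,\|\Delta X_{k-1}\|\to 0,
\]
so $\Delta X_k$ is asymptotically $-F(X_k)$. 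Consequently $\|\Delta X_k\|\ge\|F(X_k)\|-o(1)\ge\tau/2$ for large $k$, and along any subsequence with $X_k\to X_*$ continuity of $F$ gives $\Delta X_k\to -F(X_*)$.

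Next I would show the step sizes vanish so that backtracking is active. By Lemma \ref{lem:2}, $\alpha_k\|\Delta X_k\|\to 0$, and combined with $\|\Delta X_k\|\ge\tau/2$ this forces $\alpha_k\to 0$. Hence for all large $k$ we have $\alpha_k<\alpha$, which means the previous trial length $\widetilde\alpha_k:=\alpha_k/\rho$ was rejected, i.e.\ \emph{both} (\ref{BACKTR1}) and (\ref{BACKTR2}) fail at $\widetilde\alpha_k$. Using $\Gamma_k\ge f(X_k)$ from (\ref{PINEQUALITY}) and $\delta_k>0$, the two failed tests reduce to
\[
\widehat f_{X_k}(\pm\widetilde\alpha_k\Delta X_k)-f(X_k)>-t_1\widetilde\alpha_k^2\|\Delta X_k\|^2-t_2\widetilde\alpha_k^2 f(X_k).
\]

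The last step is a mean-value expansion of the pullback in each direction. Applying the mean value theorem to $t\mapsto\widehat f_{X_k}(\pm t\Delta X_k)$ on $[0,\widetilde\alpha_k]$ produces $\theta_k^{\pm}\in(0,1)$ with $\widehat f_{X_k}(\pm\widetilde\alpha_k\Delta X_k)-f(X_k)=\pm\widetilde\alpha_k\langle\grad\widehat f_{X_k}(\pm\theta_k^{\pm}\widetilde\alpha_k\Delta X_k),\Delta X_k\rangle$, where I use $\widehat f_{X_k}(0_{X_k})=f(X_k)$. Dividing the two failed inequalities by $\widetilde\alpha_k>0$ and letting $k\to\infty$ along the subsequence $X_k\to X_*$, the base points $\pm\theta_k^{\pm}\widetilde\alpha_k\Delta X_k\to 0_{X_*}$ (since $\widetilde\alpha_k\to 0$ and $\|\Delta X_k\|\le T$), so by continuity $\grad\widehat f_{X_k}(\cdot)\to\grad\widehat f_{X_*}(0_{X_*})=\grad f(X_*)$ — the identity $\grad\widehat f_X(0_X)=\grad f(X)$ following from ${\rm D}R_X(0_X)={\rm id}$ — while $\Delta X_k\to -F(X_*)$ and the right-hand sides tend to $0$. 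This gives $\pm\langle\grad f(X_*),-F(X_*)\rangle\ge 0$ for the two signs, i.e.\ $\langle\grad f(X_*),F(X_*)\rangle$ is both $\le 0$ and $\ge 0$, hence $=0$. Finally, (\ref{GRAVEC}) and the defining property of the adjoint turn this into $\langle (JF(X_*))^*[F(X_*)],F(X_*)\rangle=\langle JF(X_*)[F(X_*)],F(X_*)\rangle=0$, which is exactly (\ref{thm:2}).

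I expect the main obstacle to be the limit of the inner products, because $\grad\widehat f_{X_k}(\cdot)$ and $\Delta X_k$ sit in the varying tangent spaces $T_{X_k}\cm$; making the convergence rigorous requires passing to a chart around $X_*$ (or to the ambient space when $\cm$ is embedded) so that the joint continuity of $(X,\eta)\mapsto\grad\widehat f_X(\eta)$ on the tangent bundle can legitimately be invoked. One should also check that both trial points $R_{X_k}(\pm\widetilde\alpha_k\Delta X_k)$ stay in the region where the pullback is smooth, which is immediate since $\widetilde\alpha_k\|\Delta X_k\|\to 0$ and $\{X_k\}\subset\Omega$ with $\Omega$ compact.
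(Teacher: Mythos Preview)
Your proof is correct and follows essentially the same approach as the paper: under the assumption that the first alternative fails, exploit the failure of both line-search tests at the rejected trial step $\rho^{-1}\alpha_k$, apply the mean-value theorem, divide through, and pass to the limit to pin the inner product at zero from both sides. The only difference is cosmetic---the paper keeps the decomposition $\Delta X_k=-F(X_k)+\beta_k\ct_{\Delta Z_{k-1}}\Delta X_{k-1}$ inside the derivative of $f\circ R_{X_k}$ and handles the two pieces separately via ${\rm D}R_{X_k}\to{\rm id}$ and $\ct_{\Delta Z_{k-1}}\beta_k\Delta X_{k-1}\to 0_{X_*}$, whereas you first collapse $\Delta X_k\to-F(X_*)$ and then work with the pullback gradient $\grad\widehat f_{X_k}$.
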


\begin{proof}
Let  $X_{*}$ be any accumulation point of the sequence $\{X_{k}\}$. One may assume that
$\lim\limits_{k \to \infty} X_{k}=X_{*}$, taking a subsequence if necessary.  By Lemma \ref{lem:2} we have
\BE\label{af}
\lim \limits_{k \to \infty} \alpha_{k}^{2}f(X_{k})=\lim \limits_{k \to \infty} \frac{1}{2}\alpha_{k}^{2}\|F(X_{k})\|^{2}=0.
\EE
If $\liminf\limits_{k\to\infty}\alpha_{k}>0$, then it follows from (\ref{af}) that
\[
\liminf\limits_{k \to \infty} \|F(X_{k})\|=0.
\]
In this case, $\|F(X_{*})\|=0$ since $F$ is continuous and $\lim\limits_{k \to \infty} X_{k}=X_{*}$.

In the following, we assume that  $\liminf\limits_{k\to\infty}\alpha_{k}=0$ and $\liminf\limits_{k \to \infty} \|F(X_{k})\|>0$.
From Step 3 of Algorithm {\rm \ref{RDF-PRP}}, taking a subsequence if necessary, we may assume that $\rho^{-1}\alpha_{k}$ satisfies neither (\ref{BACKTR1}) nor (\ref{BACKTR2}) for $k$ large enough and thus
\BE\label{fx:1}
f( R_{X_k}(\rho^{-1}\alpha_k \Delta X_k) )> \Gamma_{k}+\delta_{k}-t_1\rho^{-2}\alpha_k^2\|\Delta X_k\|^{2}-t_2\rho^{-2}\alpha_k^2f(X_k)
\EE
and
\BE\label{fx:2}
f( R_{X_k}(-\rho^{-1}\alpha_k \Delta X_k) )> \Gamma_{k}+\delta_{k}-t_1\rho^{-2}\alpha_k^2\|\Delta X_k\|^{2}-t_2\rho^{-2}\alpha_k^2f(X_k).
\EE
From (\ref{PINEQUALITY}) we have $\Gamma_{k}\geq f(X_{k})\geq0$. This, together with  (\ref{fx:1}), yields
\[
f( R_{X_k}(\rho^{-1}\alpha_k \Delta X_k) )> f(X_k)-t_1\rho^{-2}\alpha_k^2\|\Delta X_k\|^{2}-t_2\rho^{-2}\alpha_k^2f(X_k).
\]
From Assumption {\rm \ref{ASSUM}}, it follows that
\[
f(X_{k})\leq f(X_{0})+\delta.
\]
By hypothesis, the sequence $\{\|F(X_k)\|\}$ is bounded from below. Thus, the condition (\ref{fr}) in  Lemma \ref{lem:4} is satisfied.
By using Lemma \ref{lem:4} and (\ref{Frx})  we have
\[
f( R_{X_k}(\rho^{-1}\alpha_k \Delta X_k) )- f(X_k)>-\Upsilon\alpha_k^2,
\]
where $\Upsilon=t_1\rho^{-2}T^2+t_2\rho^{-2}(f(X_{0})+\delta)$. Hence,
\BE\label{SPA0}
\frac{f( R_{X_k}(\rho^{-1}\alpha_k \Delta X_k) )-f(X_k)}{\alpha_k}>-\Upsilon\alpha_k.
\EE

Let $\gamma(t): = R_{X_k}(t\rho^{-1}\alpha_k \Delta X_k)$ for all  $t \in [0,1]$.
It follows from (\ref{CONJUG}) that for $t\in (0.1)$,
\begin{eqnarray}\label{GAMMACURVE}
\dot{\gamma}(t) &=& {\rm D}R_{X_k}(t\rho^{-1}\alpha_k \Delta X_k)[\rho^{-1}\alpha_k \Delta X_k]
= \rho^{-1}\alpha_k {\rm D}R_{X_k}(t\rho^{-1}\alpha_k \Delta X_k)[\Delta X_k] \nonumber\\
&=&  -\rho^{-1}\alpha_k {\rm D}R_{X_k}(t\rho^{-1}\alpha_k \Delta X_k)[F(X_k)] \nonumber\\
&& + \rho^{-1}\alpha_k {\rm D}R_{X_k}(t\rho^{-1}\alpha_k \Delta X_k)
[\ct_{\Delta Z_{k-1}}\beta_{k}\Delta X_{k-1}].
\end{eqnarray}
By the mean value theorem and using (\ref{GAMMACURVE}), there exists a $\theta\in(0,1)$ such that
\begin{eqnarray*}\label{SPA1}
& &f( R_{X_k}(\rho^{-1}\alpha_k \Delta X_k) ) - f(X_k) = f(\gamma(1)) - f(\gamma(0)) \nonumber\\
&=&  \dot{\gamma}(\theta)(f)= \langle \nabla_{\dot{\gamma}(\theta)} F, F(\gamma(\theta)) \rangle
= \langle JF(\gamma(\theta)[\dot{\gamma}(\theta)], F(\gamma(\theta)) \rangle\nonumber\\
&=& \langle JF(\gamma(\theta))\big[ \rho^{-1}\alpha_k {\rm D}R_{X_k}(\theta\rho^{-1}\alpha_k \Delta X_k)[\Delta X_k] \big], F(\gamma(\theta)) \rangle \nonumber\\
&=& \rho^{-1}\alpha_k\langle JF(\gamma(\theta))\big[  {\rm D}R_{X_k}(\theta\rho^{-1}\alpha_k \Delta X_k)[\Delta X_k] \big], F(\gamma(\theta)) \rangle \nonumber\\
&=&  -\rho^{-1}\alpha_k\langle  JF(\gamma(\theta))\big[ {\rm D}R_{X_k}(\theta\rho^{-1}\alpha_k \Delta X_k)[F(X_k)] \big], F(\gamma(\theta)) \rangle \nonumber\\
& &  + \rho^{-1}\alpha_k\langle  JF(\gamma(\theta))\big[ {\rm D}R_{X_k}(\theta\rho^{-1}\alpha_k \Delta X_k)[\ct_{\Delta Z_{k-1}}
\beta_k\Delta X_{k-1}]\big], F(\gamma(\theta)) \rangle.
\end{eqnarray*}
This, together with (\ref{SPA0}), yields
\begin{eqnarray}\label{eprp}
-\Upsilon\alpha_k &<& -\rho^{-1}\langle JF(\gamma(\theta))\big[ {\rm D}R_{X_k}(\theta\rho^{-1}\alpha_k \Delta X_k)[F(X_k)] \big],  F(\gamma(\theta)) \rangle \nonumber\\
&&+ \rho^{-1} \langle JF(\gamma(\theta))\big[ {\rm D}R_{X_k}(\theta\rho^{-1}\alpha_k \Delta X_k)[\ct_{\Delta Z_{k-1}}
\beta_k\Delta X_{k-1}]\big], F(\gamma(\theta)) \rangle.
\end{eqnarray}
By Lemma \ref{lem:2} and using the smoothness and local rigidity condition of retraction \cite[(4.2)]{AMS08} we have
\BE\label{SMRV1}
\lim_{k\to \infty}{\rm D}R_{X_k}(\theta\rho^{-1}\alpha_k \Delta X_k) = {\rm D}R_{X_*}(0_{X_*}) = {\rm id}_{T_{X_*}\cm},
\EE
where ${\rm id}_{T_{X_*}\cm}$ denotes the identity operator on $T_{X_*}\cm$.
From Lemma \ref{lem:2}, Lemma \ref{lem:4}, (\ref{RER1}), (\ref{RER2}), and using the smoothness and consistency condition of vector transport \cite[Definition 8.1.1]{AMS08}, we obtain
\BE\label{SMRV2}
\lim_{k \to \infty} \ct_{\Delta Z_{k-1}} \beta_k\Delta X_{k-1} = \ct_{0_{X_*}}0_{X_*} = 0_{X_*}.
\EE
By using  Lemma \ref{lem:2}, Lemma \ref{lem:4},  (\ref{SMRV1}), (\ref{SMRV2}), and taking limits in (\ref{eprp}), we have
\[
\langle JF(X_*)[F(X_*)], F(X_*) \rangle \leq0.
\]
Similarly, we can deduce from (\ref{fx:2}) that
\[
\langle JF(X_*)[F(X_*)], F(X_*) \rangle \geq0.
\]
The equality (\ref{thm:2}) follows from the last two inequalities.
\end{proof}

From Theorem \ref{Thm:1}, we have the following corollary.
\begin{corollary}\label{Cor}
Suppose  Assumption {\rm \ref{ASSUM}} is satisfied and  Algorithm {\rm \ref{RDF-PRP}} generates an infinite sequence $\{X_{k}\}$. Let $X_{*}$ be an accumulation point of $\{X_{k}\}$. If
\[
\langle JF(X_*)[\xi_{X_*}], \xi_{X_*} \rangle \neq0,\quad \forall 0_{X_*}\neq \xi_{X_*}\in T_{X_*}\cm,
\]
then $F(X_{*})=0_{X_*}$.
\end{corollary}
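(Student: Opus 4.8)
The plan is to read the conclusion straight off Theorem~\ref{Thm:1} and then knock out its first alternative using the definiteness hypothesis, so that the entire argument reduces to a two-line case analysis at the fixed accumulation point $X_*$. First I would record the dichotomy that Theorem~\ref{Thm:1} provides for this particular $X_*$: either $\liminf_{k\to\infty}\|F(X_k)\|=0$, or $\langle JF(X_*)[F(X_*)], F(X_*)\rangle=0$. The corollary's hypothesis then has to be used to force $F(X_*)=0_{X_*}$ in each branch.

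The inner-product branch is immediate. The standing hypothesis is that $\langle JF(X_*)[\xi_{X_*}], \xi_{X_*}\rangle\neq 0$ for \emph{every} nonzero $\xi_{X_*}\in T_{X_*}\cm$. Specializing this to $\xi_{X_*}=F(X_*)$ shows that $F(X_*)\neq 0_{X_*}$ would force $\langle JF(X_*)[F(X_*)], F(X_*)\rangle\neq 0$, contradicting $\langle JF(X_*)[F(X_*)], F(X_*)\rangle=0$. Hence $F(X_*)=0_{X_*}$ in this branch, with no further work.

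The branch $\liminf_{k\to\infty}\|F(X_k)\|=0$ is where I expect the only genuine care to be needed, and it is the main obstacle. The literal global statement $\liminf_k\|F(X_k)\|=0$ does \emph{not} by itself determine the value of $F$ at an arbitrary accumulation point, since the subsequence realizing the $\liminf$ could differ from the subsequence $X_k\to X_*$. To close this gap I would not rely on the bare statement of Theorem~\ref{Thm:1} but on the finer fact established inside its proof: this branch arises precisely when $\liminf_k\alpha_k>0$ along the subsequence with $X_k\to X_*$, and in that situation the relation (\ref{af}) from Lemma~\ref{lem:2}, namely $\lim_k \tfrac12\alpha_k^2\|F(X_k)\|^2=0$, combined with the continuity of $F$ and $X_k\to X_*$, already forces $\|F(X_*)\|=0$, i.e. $F(X_*)=0_{X_*}$, directly at $X_*$. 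Putting the two branches together yields $F(X_*)=0_{X_*}$ in all cases, which is the assertion. I would therefore phrase the write-up so that it invokes the \emph{per-accumulation-point} conclusion of Theorem~\ref{Thm:1} (as its proof actually delivers) rather than the global $\liminf$, making the matching of $X_*$ to the controlling subsequence explicit and thereby disposing of the one nontrivial point.
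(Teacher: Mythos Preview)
Your proposal is correct and follows the same route the paper intends: the corollary is read off Theorem~\ref{Thm:1}, and indeed the paper supplies no proof at all, treating it as immediate. You are actually more careful than the paper. The first alternative of Theorem~\ref{Thm:1} is stated as a global $\liminf_{k}\|F(X_k)\|=0$, which, as you observe, does not by itself identify $F(X_*)$ at a \emph{prescribed} accumulation point; you correctly close this gap by invoking the per-accumulation-point dichotomy that the proof of Theorem~\ref{Thm:1} actually establishes (after passing to a subsequence with $X_k\to X_*$, continuity of $F$ forces $\|F(X_*)\|=0$ in that branch). This extra care is justified and improves on the paper's presentation; the argument is otherwise identical in spirit.
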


Suppose $F:\cm \to T\cm$ is a strongly geodesic monotone vector field \cite{DFL06,LLM09,NE99,WLML10}
and is continuously differentiable, then there exists a positive constant $\lambda>0$ such that
\BE\label{MONOTONE}
\langle  JF(X)[\xi_X], \xi_X \rangle > \lambda\|\xi_X\|^2, \quad \forall 0_X\neq \xi_X\in T_X\cm, \; X\in \cm.
\EE
By Corollary \ref{Cor} and (\ref{MONOTONE}), we have the following result.
\begin{corollary}\label{Cor2}
Suppose $F$ or $-F$ is strictly monotone and continuously differentiable, and  Algorithm {\rm \ref{RDF-PRP}} generates an infinite sequence $\{X_{k}\}$. Then every accumulation point of $\{X_k\}$ is a zero of $F$.
\end{corollary}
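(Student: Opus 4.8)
The plan is to reduce the statement entirely to Corollary \ref{Cor}, whose hypothesis only requires that the quadratic form $\xi_{X_*}\mapsto\langle JF(X_*)[\xi_{X_*}],\xi_{X_*}\rangle$ be nonzero on every nonzero tangent vector. First I would note that an accumulation point $X_*$ of $\{X_k\}$ exists and lies in $\cm$: by Lemma \ref{lem:2} the whole sequence is contained in $\Omega$, which is compact, so at least one accumulation point is guaranteed. Fixing such an $X_*$, I would then verify the nonvanishing condition in each of the two cases and invoke Corollary \ref{Cor}.

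In the case where $F$ itself is strictly (strongly geodesic) monotone, the definiteness estimate (\ref{MONOTONE}) applies directly: there is $\lambda>0$ with $\langle JF(X_*)[\xi_{X_*}],\xi_{X_*}\rangle>\lambda\|\xi_{X_*}\|^2>0$ for every $0_{X_*}\neq\xi_{X_*}\in T_{X_*}\cm$, so the quadratic form is strictly positive and in particular nonzero on all nonzero tangent vectors; Corollary \ref{Cor} then yields $F(X_*)=0_{X_*}$. In the case where $-F$ is strictly monotone, I would use that the Jacobian is linear in the vector field: since $JF(X)[\xi_X]=\nabla_{\xi_X}F$ and the Riemannian connection $\nabla$ is linear in its vector-field argument, one has $J(-F)(X)=-JF(X)$ as operators on $T_X\cm$. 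Applying (\ref{MONOTONE}) to $-F$ gives, for some $\lambda>0$,
\[
-\langle JF(X_*)[\xi_{X_*}],\xi_{X_*}\rangle=\langle J(-F)(X_*)[\xi_{X_*}],\xi_{X_*}\rangle>\lambda\|\xi_{X_*}\|^2>0,
\]
so that $\langle JF(X_*)[\xi_{X_*}],\xi_{X_*}\rangle<0$ for every nonzero $\xi_{X_*}$. Again the form is nowhere zero on nonzero tangent vectors, and Corollary \ref{Cor} gives $F(X_*)=0_{X_*}$. Since a zero of $-F$ coincides with a zero of $F$, the conclusion $F(X_*)=0_{X_*}$ is exactly the one desired in both cases.

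There is no genuinely hard step here; the substance is already carried by Corollary \ref{Cor} and by the definiteness bound (\ref{MONOTONE}). The only point deserving a word of justification — and the closest thing to an obstacle — is the identity $J(-F)=-JF$, i.e.\ that replacing $F$ by $-F$ merely flips the sign of the quadratic form while preserving its definiteness; this is immediate from the $\R$-linearity of $\nabla$ in its vector-field slot. Everything else is a direct substitution into the previously established results.
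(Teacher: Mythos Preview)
Your proposal is correct and follows exactly the route the paper indicates: the paper's entire proof is the single sentence ``By Corollary \ref{Cor} and (\ref{MONOTONE}), we have the following result,'' and you have simply fleshed out those two ingredients, including the sign argument $J(-F)=-JF$ needed for the $-F$ case. The existence of accumulation points is an added remark but harmless; the rest is a faithful expansion of the paper's one-line justification.
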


\section{Numerical Experiments}\label{sec4}
\setcounter{table}{0}
\setcounter{figure}{0}

In this section, we consider the application of Algorithm \ref{RDF-PRP} to finding zeros of Oja's vector fields \cite{AILH09}, the tangent vector field corresponding to the trace ratio optimization problem {\rm \cite{NB10, ZL14, ZL15}}, and monotone tangent vector fields on Hadamard manifolds \cite{DFLS06}.
All numerical tests are carried out using {\tt MATLAB} R2010a on a Lenovo Laptop Intel(R) Core(TM)2 i7-8550U with a 1.80 GHz CPU and 16-GB RAM.

In our numerical tests, we set $\rho=0.5$, $\lambda_k = 0.6$, $t_1 = t_2 = 10^{-10}$, $\alpha_{\min} = 10^{-10}$,
$\alpha_{\max} = 10^{10}$, and $\delta_k = \|F(X_0)\|/((2+k)\ln^2(2+k))$ for all $k$.
In Step 3 of Algorithm \ref{RDF-PRP}, the initial steplength $\alpha_{k_0}$ is set to be
\[
\alpha_{k_0} =
\left\{
\begin{array}{ll}
\alpha_{\max}, &\quad \mbox{if}\; \sigma > \alpha_{\max}, \\[2mm]
\sigma,        &\quad \mbox{if}\; \sigma \in [\alpha_{\min}, \alpha_{\max}], \\[2mm]
\alpha_{\min}, &\quad \mbox{if}\; \sigma < \alpha_{\max},
\end{array}
\right.
\]
where
\[
\sigma = \left | \frac{\langle F(X_k), \Delta X_k \rangle}{\langle Z_k, \ct_{\epsilon \Delta X_k}\Delta X_k \rangle} \right |,
\quad Z_k = \frac{F(R_{X_k}(\epsilon \Delta X_k))- \ct_{\epsilon \Delta X_k}F(X_k)}{\epsilon}, \quad \epsilon = 10^{-8}.
\]
The stopping criterion for Algorithm \ref{RDF-PRP} for solving (\ref{prb}) is set to be \cite{CMR06,ML14}
\[
\frac{\|F(X_k)\|}{\sqrt{M}} \leq e_a + e_r \frac{\|F(X_0)\|}{\sqrt{M}},
\]
where $e_a=10^{-6}$, $e_r = 10^{-5}$, and $M$ denotes the dimension of $\cm$.

For comparison purposes,  we repeat our experiments over $10$ different random generated problems.
In our numerical tests,  `{\tt DIM.}' denotes the dimension of  $\cm$,
`{\tt CT.}', {\tt IT.}', and `{\tt NF.}' mean the averaged total computing time in seconds,
the averaged number of iterations, the averaged number of function evaluations
at the final iterates of our algorithm accordingly.
In addition,  `{\tt Res0.}' and `{\tt Res.}' denote the averaged residual $\|F(X_k)\|$
at the initial iterates and final iterates of our algorithm, respectively.

\begin{example}\label{ex:1}
We consider the problem of finding a zero of Oja's vector field defined by real symmetric positive-definite matrices {\rm \cite{AILH09}}.
Let $A\in \R^{m\times m}$ be a symmetric positive-definite matrix, and $p$ be a positive integer smaller than $m$.
The Oja's vector field $F:\R^{m\times p} \to \R^{m\times p}$ associated with $A$ is given by {\rm \cite{AILH09,OJA82,OJA89}}
\BE\label{OJASTIEFEL}
F(X)= AX - XX^TAX,\quad \forall X\in\R^{m\times p}.
\EE
Suppose $X\in \R^{m\times p}$ is of full column rank. Then $X$ is a solution to $F(X)= \mathbf{0}$ if and only if
the column space of $X$ is an invariant subspace of $A$ and $X$ is orthonormal
{\rm (}i.e., $X^TX = I_p${\rm )} {\rm (}see {\rm \cite[Proposition 2.1]{AILH09})},
where $I_p$ is the identity matrix of order $p$.
Thus we can restrict the nonlinear map $F$ to the compact Stiefel manifold
${\rm St}(p,m)$ {\rm \cite[p.26]{AMS08}}, i.e., $F:{\rm St}(p,m) \to T{\rm St}(p,m)$.
The dimension of the Stiefel manifold ${\rm St}(p,m)$ is equal to $mp - \frac{1}{2}p(p+1)$ {\rm \cite[p.27]{AMS08}}.
Let $\mathcal{O}(p)= {\rm St}(p,p)$, which is the orthogonal group {\rm \cite[p.27]{AMS08}}.
Since $F(XQ) =F(X)Q$ for any $Q\in \mathcal{O}(p)$, the zeros of $F$ are degenerate, thus Newton's method can't be applied directly.
To apply Riemannian Newton's method, one need to restrict $F$ to the Grassmann manifold ${\rm Grass}(p,m):= {\rm St}(p,m)/\mathcal{O}(p)$,
while the application of Algorithm {\rm \ref{RDF-PRP}} to finding a zero of {\rm (\ref{OJASTIEFEL})} does not need
the nondegeneracy condition of the zeros of $F$.
Let ${\rm St}(p,m)$ be endowed with induced Riemannian metric from $\R^{m\times p}$, i.e.,
\[
g_X(\xi_X,\eta_X):=\tr(\xi_X^T\eta_X),\quad\forall \xi_X,\eta_X\in T_X \st(p,m),\; X\in\st(k,n).
\]
The retraction $R$ on $\st(k,n)$ is chosen as {\rm \cite[p.59]{AMS08}}
\BE\label{retr:st}
R_X(\xi_X) = \qf(X + \xi_X),
\EE
for all $\xi_X\in T_X \st(p,m)$ and $X\in\st(p,m)$,
where $\qf(X+\xi_X)$ is the $Q$ factor of the QR decomposition of $X+\xi_X\in\R^{m\times p}_*$ with
$X+\xi_X = Q\widetilde{R}$.
Here, the set $\R^{m\times p}_*$ denotes the set of all real $m\times p$ matrices with linearly independent columns,
$Q\in\st(p,m)$, and $\widetilde{R}$ is an upper triangular $p\times p$ matrix with strictly positive diagonal elements.
The orthogonal projection of a matrix $Z\in \R^{m\times p}$ onto $T_X \st(p, m)$ is given by
\[
\mathrm{P}_XZ = (I_n - XX^T )Z + X \skw(X^TZ)=Z-X\sym(X^TZ),
\]
where $\skw(A):=(A-A^T)/2$ and  $\sym(A):=(A+A^T)/2$ for a real square matrix. Since $\st(p,m)$ is an embeded submanifold of $\R^{m\times p}$, we may adopt the vector transport defined by {\rm \cite[p.174]{AMS08}}
\BE\label{TTST}
\ct_{\eta_X}\xi_X := (I_n - YY^T )\xi_X + Y \skw(Y^TZ)= \xi_X-Y\sym(Y^T\xi_X),
\EE
for $\xi_X, \eta_X\in T_X \st(p,m)$, where $Y:= R_X(\eta_X)\in \st(p,m)$.
Thus condition {\rm {\rm (\ref{TRANSPORT})}}
in Assumption {\rm {\rm \ref{ASSUM}}} is satisfied.

We consider the problem of finding a zero of the Oja's vector field $F:{\rm St}(p,m) \to T{\rm St}(p,m)$ defined by {\rm (\ref{OJASTIEFEL})} with varying $m$ and $p$.
Let $A$ be a random $m\times m$ matrix generated by the {\tt MATLAB} built-in functions {\tt rand}, {\tt randn}, and {\tt qr}{\rm :}
\[
D = \mbox{\tt rand}(m,1), \quad  B = \mbox{\tt randn}(m,m),
\quad [Q,S] = \mbox{\tt qr}(B), \quad A = QDQ^{T} .
\]
Thus $A$ is a random symmetric positive-definite matrix with uniformly distributed eigenvalues in the interval $[0,1]$.
The starting points are randomly generated by the {\tt MATLAB} built-in functions {\tt randn} and {\tt qr}{\rm :}
\[
\begin{array}{lcl}
W = \mbox{\tt randn}\,(m,p),& \quad & \big[ X_0, \widehat{R} \big] = \mbox{\tt qr}\,(W).
\end{array}
\]
\end{example}

Table \ref{table1} lists the numerical results for Example \ref{ex:1}.
We observe from Table \ref{table1} that the iteration number and the number of function evaluations do not change obviously with the increase of the dimension of the Stiefel manifold $\st(p,m)$. This indicates that Algorithm \ref{RDF-PRP} is stable and suitable for solving large-scale problems.


To further illustrate the effectiveness of our algorithm, in Figure \ref{f1}, we give the convergence history of Algorithm \ref{RDF-PRP} for two tests with $(m,p)=(6000,30)$ and $(m,p)=(3000,120)$.
Figure \ref{f1} depicts the logarithm of the residual versus the number of iterations for finding a zero of Oja's vector field defined in  Example \ref{ex:1}.
The convergence trajectory indicates that the residual decreases steadily as the number of iterations increases.
\begin{table}[htbp]
  \caption{Numerical results for Example \ref{ex:1}.}\label{table1}
  \begin{center} {\scriptsize
   \begin{tabular}[c]{|r|r|r|c|c|c|c|}
     \hline
     \multicolumn{7}{|c|}{$p=30$}\\ \hline
  $m$ & {\tt DIM.} & {\tt CT.}  & {\tt IT.} & {\tt NF.} & {\tt Res0.} & {\tt Res.} \\  \hline
 1000   &  29535    &  0.6938 s   & 131.7   &  137.7  &   1.5558  & $1.8068\times 10^{-4}$   \\  \hline
 2000   &  59535    &  2.5411 s   & 147.5   &  154.1  &   1.5714  & $2.5403\times 10^{-4}$   \\  \hline
 3000   &  89535    &  5.1273 s   & 179.4   &  185    &   1.5780  & $3.1178\times 10^{-4}$   \\  \hline
 4000   & 119535    &  7.8231 s   & 176.3   &  185.7  &   1.5746  & $3.5450\times 10^{-4}$     \\  \hline
 5000   & 149535    &  14.0162 s  & 186.9   &  195.9  &   1.5710  & $3.9744\times 10^{-4}$     \\  \hline
 6000   & 179535    &  19.1962 s  & 188.3   &  198.1  &   1.5707  & $4.3520\times 10^{-4}$     \\  \hline
 7000   & 209535    &  25.5629 s  & 179.4   &  189.8  &   1.5765  & $4.6722\times 10^{-4}$   \\  \hline
 8000   & 239535    &  33.7059 s  & 176.3   &  197.1  &   1.5816  & $5.0002\times 10^{-4}$     \\  \hline
 9000   & 269535    &  41.5631 s  & 186.9   &  194.1  &   1.5789  & $5.3148\times 10^{-4}$     \\  \hline
 10000  & 299535    &  54.4226 s  & 188.3   &  202.7  &   1.5754  & $5.5995\times 10^{-4}$     \\  \hline
   \cline{1-5} \hline
     \hline
     \multicolumn{7}{|c|}{$m=3000$}\\ \hline
  $p$ & {\tt DIM.} & {\tt CT.}  & {\tt IT.} & {\tt NF.} & {\tt Res0.} & {\tt Res.} \\  \hline
 20   &  59790  &  3.9187 s   & 186.1  &  193.7  &  1.2880  & $2.5059\times 10^{-4}$  \\  \hline
 40   & 119180  &  5.7383 s   & 169.0  &  175.2  &  1.8110  & $3.5750\times 10^{-4}$  \\  \hline
 60   & 178170  &  9.1466 s   & 170.9  &  179.1  &  2.2136  & $4.3705\times 10^{-4}$  \\  \hline
 80   & 236760  & 11.8402 s   & 165.5  &  176.1  &  2.5571  & $5.0355\times 10^{-4}$  \\  \hline
 100  & 294950  & 15.9621 s   & 155.2  &  165.6  &  2.8397  & $5.6216\times 10^{-4}$  \\  \hline
 120  & 352740  & 17.1358 s   & 151.5  &  160.9  &  3.1074  & $6.1428\times 10^{-4}$  \\  \hline
 140  & 410130  & 19.2442 s   & 139.3  &  148.9  &  3.3493  & $6.6242\times 10^{-4}$  \\  \hline
 160  & 467120  & 21.7834 s   & 134.7  &  143.9  &  3.5464  & $7.0720\times 10^{-4}$  \\  \hline
 180  & 523710  & 24.5269 s   & 131.8  &  142.4  &  3.7413  & $7.4701\times 10^{-4}$  \\  \hline
 200  & 579900  & 30.2070 s   & 157.5  &  149.1  &  3.9330  & $7.8585\times 10^{-4}$  \\  \hline
   \cline{1-5} \hline
  \end{tabular} }
  \end{center}
\end{table}
\begin{figure}[htbp]
\begin{center}
\begin{tabular}{cc}
\epsfig{figure=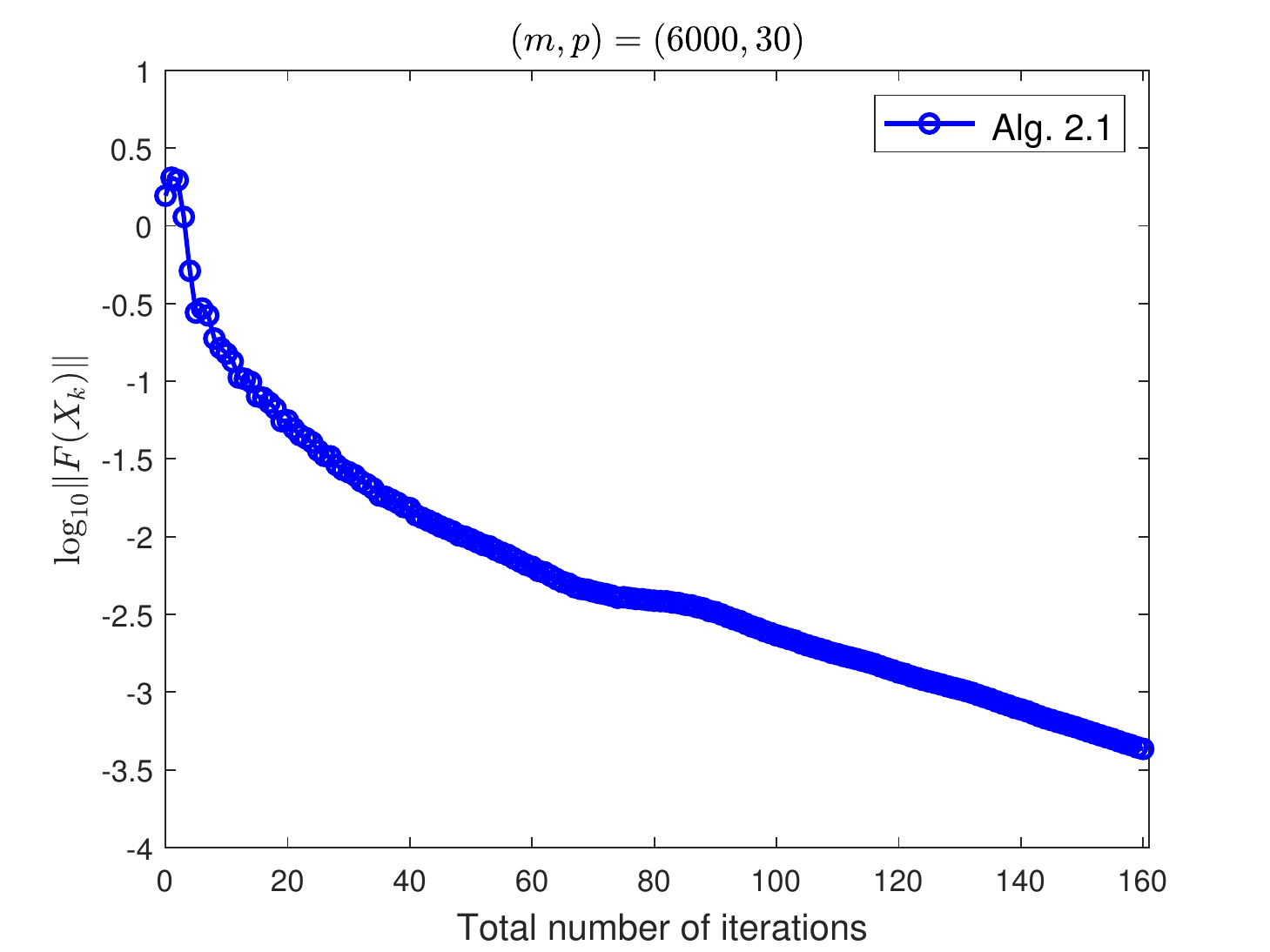,width=6.2cm} & \epsfig{figure=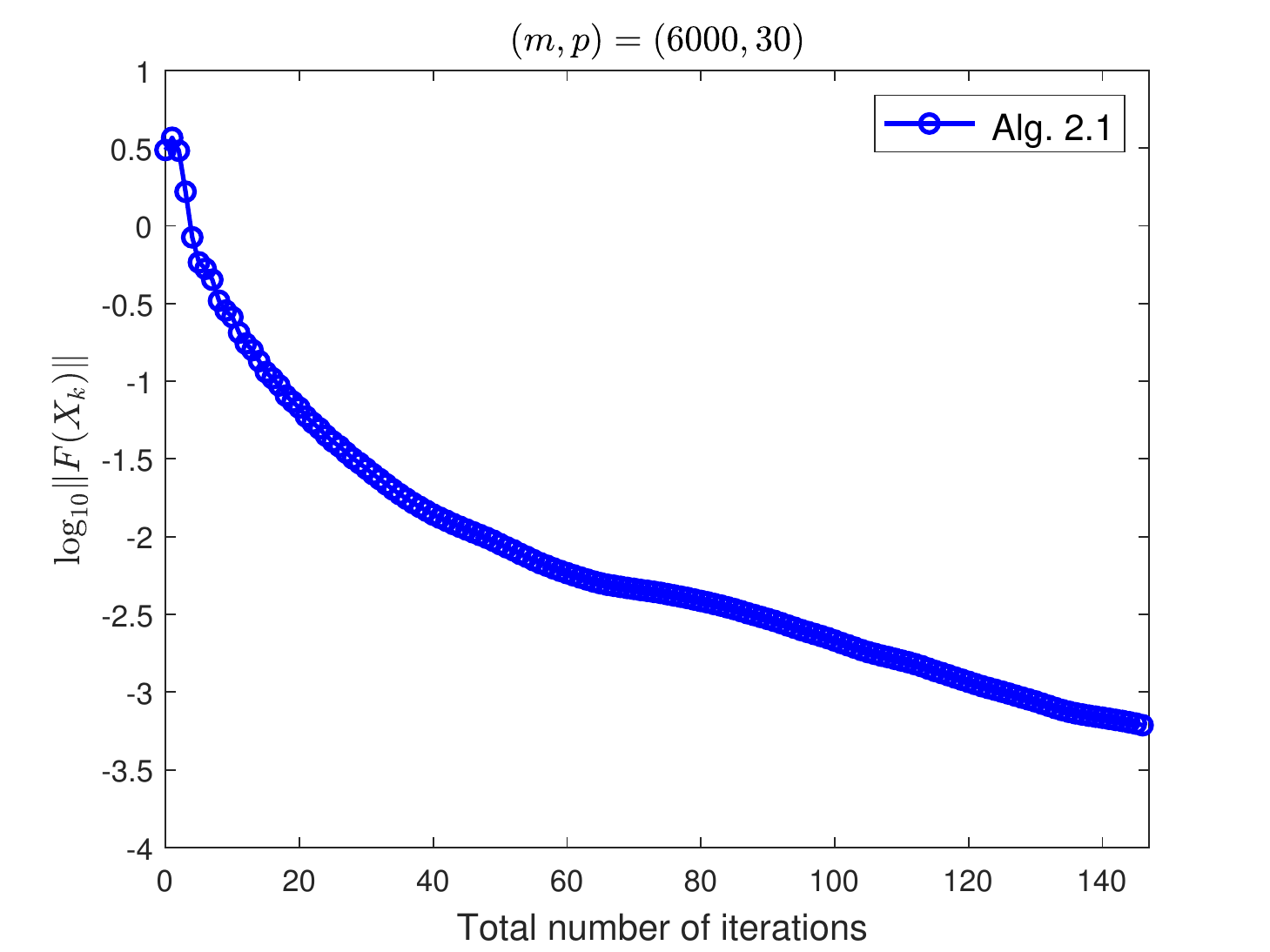,width=6.2cm}
\end{tabular}
\end{center}
 \caption{Convergence history of two tests for Example \ref{ex:1}.}
 \label{f1}
\end{figure}

\begin{example}\label{ex:2}
We consider the problem of finding a zero of the tangent vector field corresponding to the first-order optimization conditions for
the trace ratio optimization problem {\rm \cite{NB10, ZL14, ZL15}}.
Let $A,B,C\in \R^{m\times m}$ be real symmetric matrices with $B$ being positive-definite
and $p$ be a positive integer smaller than $m/2$.
The tangent vector field $F:{\rm St}(p,m) \to T{\rm St}(p,m)$ is given by {\rm \cite[Theorem 2.1]{ZL14}}
\BE\label{TRACERATIO}
F(X)= E(X)X-X(X^TE(X)X),\quad\forall X\in\st(p,m),
\EE
where
\[
E(X):= A\frac{1}{\phi_B(X)} - B\frac{\phi_A(X)}{\phi_B^2(X)} + C,
\]
and $\phi_S(X):={\rm tr}(X^TSX)$ for any $m\times m$ real symmetric matrix $S$. We choose the retraction $R$ on $\st(k,n)$  as in {\rm (\ref{retr:st})}. The vector transport on $\st(p,m)$ is chosen the same as {\rm (\ref{TTST})} and thus condition {\rm {\rm (\ref{TRANSPORT})}}
in Assumption {\rm {\rm \ref{ASSUM}}} is satisfied.

We consider the problem of finding a zero of the tangent vector field $F$ defined by
{\rm (\ref{TRACERATIO})} with varying $m$ and $p$.
Let $A,B,C$ be random $m\times m$ matrices generated by the {\tt MATLAB}
built-in functions {\tt rand}, {\tt randn}, {\tt orth}, {\tt diag}, and {\tt ones} {\rm \cite{CJB18}:}
\[
\begin{array}{l}
A = \mbox{\tt rand}(m,m), \quad  A = (A+A^T)/2, \quad Q = {\tt orth}({\tt randn}(m,m)),\\[2mm]
B = Q\mbox{\tt diag}(50+10*(2*{\rm rand}(m,1)-{\tt ones}(m,1)))*Q^T, \quad B = (B+B^T)/2, \\[2mm]
C = \mbox{\tt randn}(m,m), \quad  C = (C+C^T)/2.
\end{array}
\]
The starting points are randomly generated by the {\tt MATLAB} built-in functions {\tt randn} and {\tt qr}{\rm :}
\[
\begin{array}{lcl}
W = \mbox{\tt randn}\,(m,p),& \quad & \big[ X_0, \widehat{R} \big] = \mbox{\tt qr}\,(W).
\end{array}
\]
\end{example}

In Table \ref{table3}, we report numerical results for Example \ref{ex:2} with varying values of $m$ and $p$. In Figure \ref{f2}, we give the convergence history of Algorithm \ref{RDF-PRP} for two tests with $(m,p)=(3000,30)$ and $(m,p)=(2000,100)$. Figure \ref{f2} depicts the logarithm of the residual versus the number of iterations for finding a zero of the tangent vector field $F$ defined in (\ref{TRACERATIO}). We see from Table \ref{table3} and Figure \ref{f2} that Algorithm \ref{RDF-PRP} is stable and efficient for solving large-scale problems.
\begin{table}[htbp]
  \caption{Numerical results for Example \ref{ex:2}.}\label{table3}
  \begin{center} {\scriptsize
   \begin{tabular}[c]{|r|r|r|c|c|c|c|}
     \hline
     \multicolumn{7}{|c|}{$p=30$}\\ \hline
  $m$ & {\tt DIM.} & {\tt CT.}  & {\tt IT.} & {\tt NF.} & {\tt Res0.} & {\tt Res.} \\  \hline
  200 &   5535  &  0.1362  s   & 100.6  &  112.4 &  $5.0376\times 10^{1}$  & $5.3678\times 10^{-4}$ \\  \hline
  400 &  11535  &  0.4595  s   & 114.6  &  128.2 &  $7.4573\times 10^{1}$  & $8.1477\times 10^{-4}$ \\  \hline
  600 &  17535  &  1.5889  s   & 135.5  &  149.1 &  $9.2751\times 10^{1}$  & $1.0105\times 10^{-3}$ \\  \hline
  800 &  23535  &  1.5889  s   & 124.8  &  138.6 &  $1.0726\times 10^{2}$  & $1.1774\times 10^{-3}$ \\  \hline
 1000 &  29535  &  2.6262  s   & 139.3  &  156.9 &  $1.2027\times 10^{2}$  & $1.3185\times 10^{-3}$ \\  \hline
 2000 &  59535  & 13.6532  s   & 219.3  &  235.5 &  $1.7193\times 10^{2}$  & $1.9084\times 10^{-3}$ \\  \hline
 3000 &  89535  & 36.7627  s   & 276.1  &  291.9 &  $2.1109\times 10^{2}$  & $2.3692\times 10^{-3}$ \\  \hline
 4000 & 119535  & 63.2379  s   & 275.2  &  292.6 &  $2.4399\times 10^{2}$  & $2.7274\times 10^{-3}$ \\  \hline
 5000 & 149535  & 120.2924 s   & 307.0  &  325.6 &  $2.7326\times 10^{2}$  & $3.0436\times 10^{-3}$ \\  \hline
   \cline{1-5} \hline
%
     \hline
     \multicolumn{7}{|c|}{$m=2000$}\\ \hline
  $p$ & {\tt DIM.} & {\tt CT.}  & {\tt IT.} & {\tt NF.} & {\tt Res0.} & {\tt Res.} \\  \hline
 20  &  39790 &  8.6164 s   & 170.7  &  186.1 & $1.4107\times 10^{2}$  & $1.5714\times 10^{-3}$ \\  \hline
 40  &  79180 & 13.0390 s   & 196.6  &  212.8 & $1.9777\times 10^{2}$  & $2.2190\times 10^{-3}$  \\  \hline
 60  & 118170 & 17.9773 s   & 211.5  &  228.9 & $2.4134\times 10^{2}$  & $2.7032\times 10^{-3}$  \\  \hline
 80  & 156760 & 22.1530 s   & 198.1  &  215.3 & $2.7716\times 10^{2}$  & $3.1042\times 10^{-3}$  \\  \hline
 100 & 194950 & 31.0476 s   & 241.0  &  258.6 & $3.0827\times 10^{2}$  & $3.4166\times 10^{-3}$  \\  \hline
 120 & 232740 & 38.5245 s   & 238.3  &  258.7 & $3.3572\times 10^{2}$  & $3.7598\times 10^{-3}$  \\  \hline
 140 & 270130 & 42.8672 s   & 232.4  &  249.8 & $3.6124\times 10^{2}$  & $4.0373\times 10^{-3}$  \\  \hline
 160 & 307120 & 53.2935 s   & 252.0  &  270.6 & $3.8352\times 10^{2}$  & $4.2983\times 10^{-3}$  \\  \hline
 180 & 343710 & 48.4526 s   & 203.9  &  223.1 & $4.0462\times 10^{2}$  & $4.5413\times 10^{-3}$  \\  \hline
 200 & 379900 & 60.0266 s   & 222.1  &  240.3 & $4.2432\times 10^{2}$  & $4.7507\times 10^{-3}$  \\  \hline
   \cline{1-5} \hline
  \end{tabular} }
  \end{center}
\end{table}
\begin{figure}[htbp]
\begin{center}
\begin{tabular}{cc}
\epsfig{figure=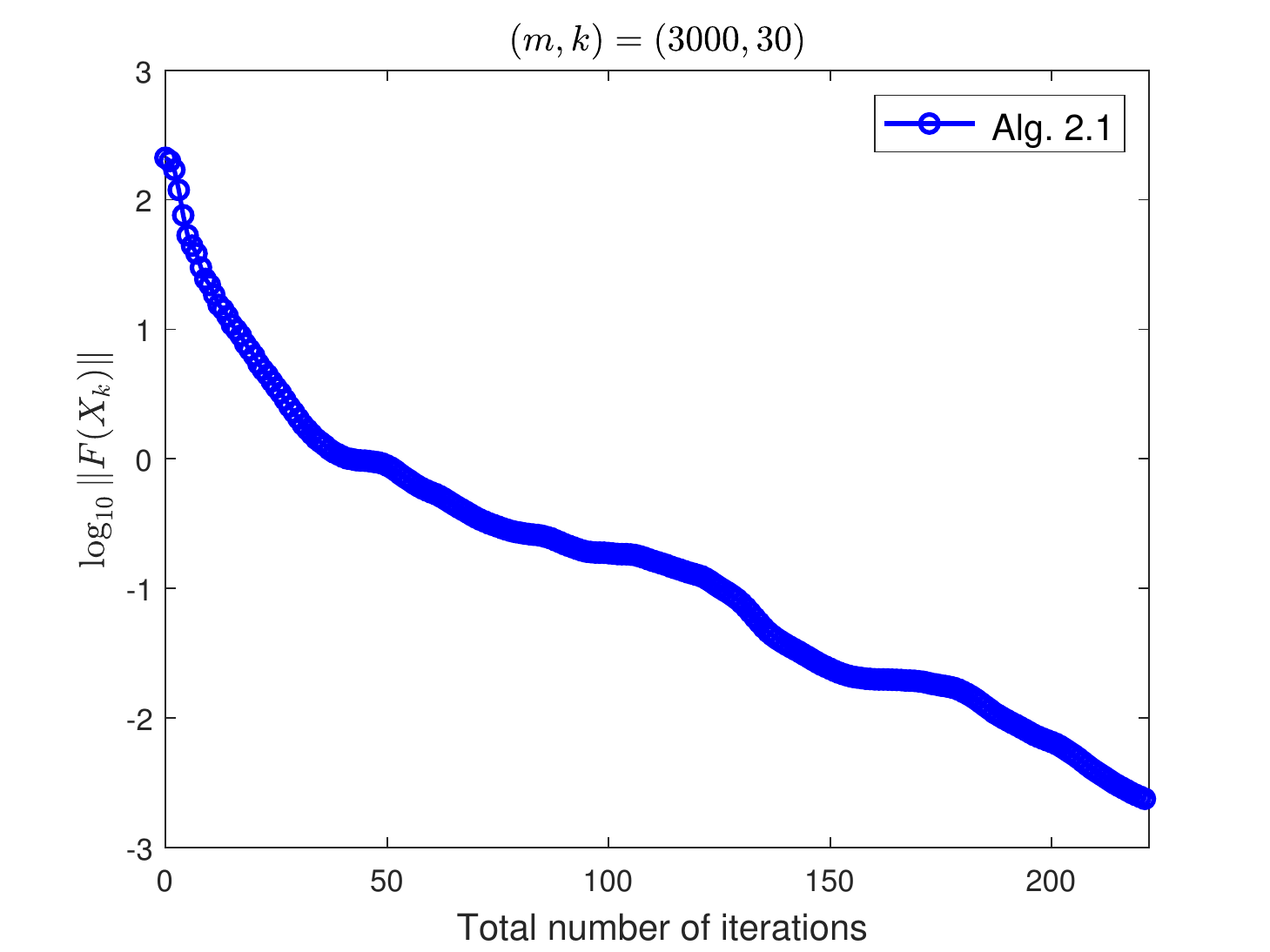,width=6.2cm} & \epsfig{figure=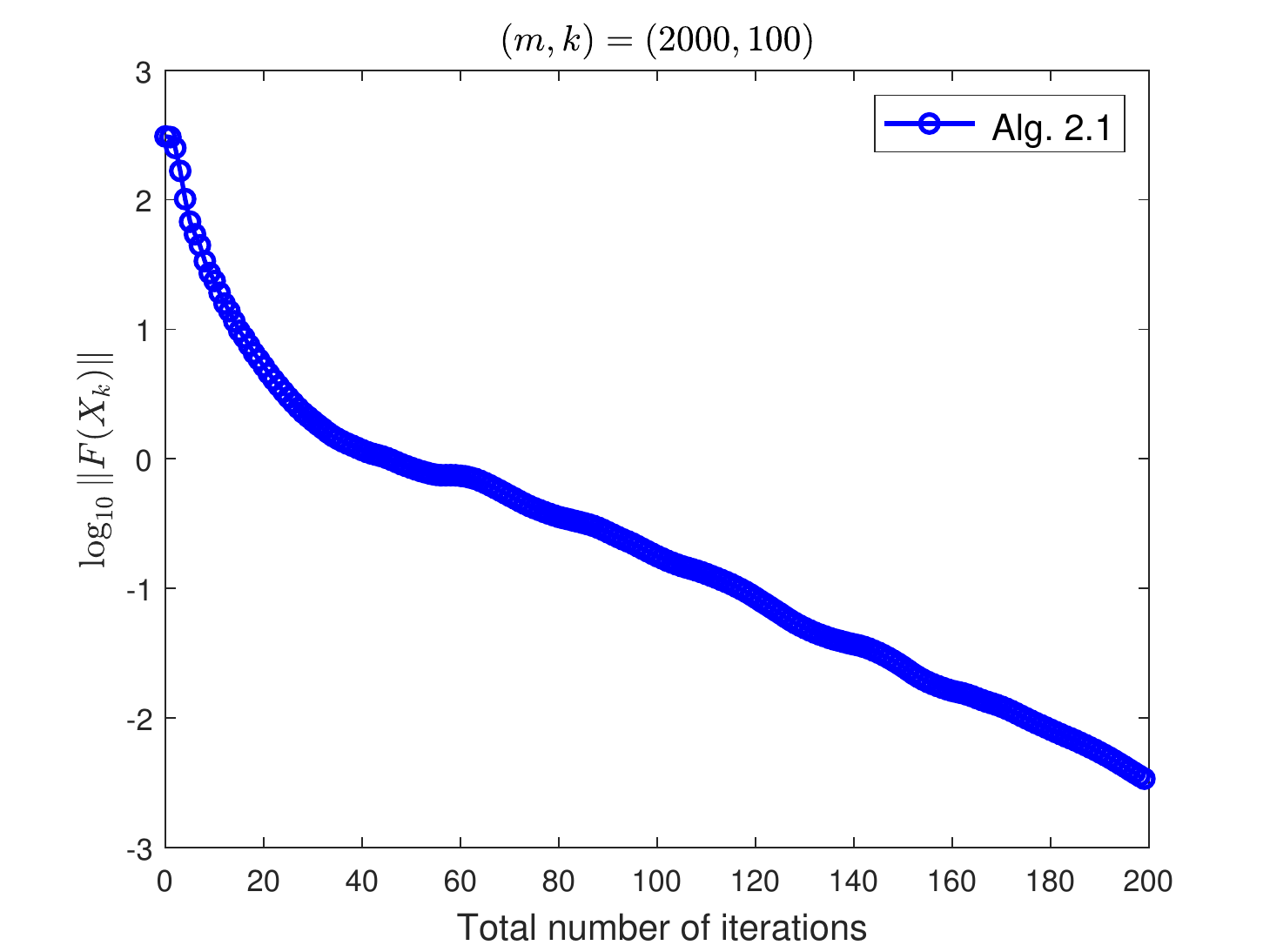,width=6.2cm}
\end{tabular}
\end{center}
 \caption{Convergence history of two tests for Example \ref{ex:2}.}
 \label{f2}
\end{figure}

\begin{example}\label{ex:3}
Let $S_{++}^m$ denote the set of all $m\times m$ real symmetric positive definite matrices.
Endowing $S_{++}^m$ with the following Riemannian metric
\[
\langle \xi_X, \eta_X \rangle := {\rm tr}(\xi_X X^{-1} \eta_XX^{-1}), \quad \forall \xi_X,\eta_X\in T_X S_{++}^m, \; X\in S_{++}^m.
\]
Thus, $S_{++}^m$ is a Hadamard manifold manifold of nonpositive curvature everywhere {\rm \cite{JE15,WLLY16}}.
The dimension of $S_{++}^m$ is equal to $m(m+1)/2$ {\rm \cite[Proposition 2.1]{HS95}}. The geodesic monotone vector field $F: S_{++}^m \to TS_{++}^m$ is defined by {\rm \cite{DFLS06}}
\BE\label{MONVECTOR}
F(X)=2( {\rm ln}\det(X))X,\quad\forall X\in S_{++}^m.
\EE
The retraction $R$ on $S_{++}^m$ is chosen as {\rm \cite[(3.10)]{JE15}}
\[
R_X(\xi_X) = \xi_X,
\]
for $\xi_X\in T_X S_{++}^m$ and $X\in S_{++}^m$.
The vector transport associated with the above  $R$ is chosen as {\rm \cite[(3.13)]{JE15}}
\[
\ct_{\eta_X}\xi_X = \xi_X,
\]
for $\xi_X,\eta_X\in T_X S_{++}^m$ and $X\in S_{++}^m$. Thus condition {\rm {\rm (\ref{TRANSPORT})}}
in Assumption {\rm {\rm \ref{ASSUM}}} is satisfied.

We consider the problem of finding a zero of the vector field $F$ defined by
{\rm (\ref{MONVECTOR})} with varying $m$.
The starting points are randomly generated by the {\tt MATLAB} built-in functions
{\tt rand}, {\tt randn}, and {\tt qr}{\rm :}
\[
G = 0.1 + \mbox{\tt rand}(m,1), \quad  H = \mbox{\tt randn}(m,m), \quad [W,T] = \mbox{\tt qr}(H), \quad X_0 = WGW^{T} .
\]
\end{example}

Table \ref{table5} shows the numerical results for Example \ref{ex:3}.
We observe from Table \ref{table5} that Algorithm \ref{RDF-PRP} requires only a few iterations and function evaluations
for finding an approximate zero of the monotone vector field (\ref{MONVECTOR}) with different values of $m$.
This indicates that Algorithm \ref{RDF-PRP} is very stable and efficient for solving large-scale problems.
In Figure \ref{f3}, we give the convergence history of Algorithm \ref{RDF-PRP} for two tests with $m=600$ and $m=1000$.
Figure \ref{f3} depicts the logarithm of the residual versus the number of iterations for finding a zero of the tangent vector field $F$ defined in (\ref{MONVECTOR}).
The convergence trajectory  indicates that the residual decreases very rapidly as the number of iterations increases,
which shows the local fast convergence speed of Algorithm \ref{RDF-PRP} for solving large-scale problems.
\begin{table}[htbp]
  \caption{Numerical results for Example \ref{ex:3}.}\label{table5}
  \begin{center} {\small
   \begin{tabular}[c]{|r|r|r|c|c|c|c|}
     \hline
  $m$ & {\tt DIM.} & {\tt CT.}  & {\tt IT.} & {\tt NF.} & {\tt Res0.} & {\tt Res.} \\  \hline
 100   &  5050   &  0.0159 s  & 5.9   &  7.0  &   $1.3313\times 10^{3}$  & $2.0499\times 10^{-4}$   \\  \hline
 200   &  20100  &  0.0432 s  & 6.2   &  7.2  &   $3.7973\times 10^{3}$  & $3.8884\times 10^{-4}$   \\  \hline
 300   &  45150  &  0.1038 s  & 6.4   &  7.4  &   $6.9051\times 10^{3}$  & $3.0280\times 10^{-5}$   \\  \hline
 400   &  80200  &  0.2603 s  & 6.5   &  7.5  &   $1.0638\times 10^{4}$  & $6.7415\times 10^{-5}$   \\  \hline
 500   & 125250  &  0.4486 s  & 6.6   &  7.6  &   $1.4760\times 10^{4}$  & $9.8202\times 10^{-5}$   \\  \hline
 600   & 180300  &  0.6918 s  & 6.3   &  7.3  &   $1.9674\times 10^{4}$  & $6.4741\times 10^{-5}$   \\  \hline
 700   &  245350 &  1.0513 s  & 6.4   &  7.4  &   $2.4651\times 10^{4}$  & $1.7830\times 10^{-4}$   \\  \hline
 800   &  320400 &  1.5852 s  & 6.6   &  7.6  &   $3.0098\times 10^{4}$  & $2.6627\times 10^{-4}$   \\  \hline
 900   & 405450  &  2.1248 s  & 6.6   &  7.6  &   $3.6046\times 10^{4}$  & $3.6220\times 10^{-4}$   \\  \hline
 1000  & 500500  &  2.8110 s  & 6.5   &  7.5  &   $4.2361\times 10^{4}$  & $2.6249\times 10^{-4}$   \\  \hline
   \cline{1-5} \hline
  \end{tabular} }
  \end{center}
\end{table}
\begin{figure}[htbp]
\begin{center}
\begin{tabular}{cc}
\epsfig{figure=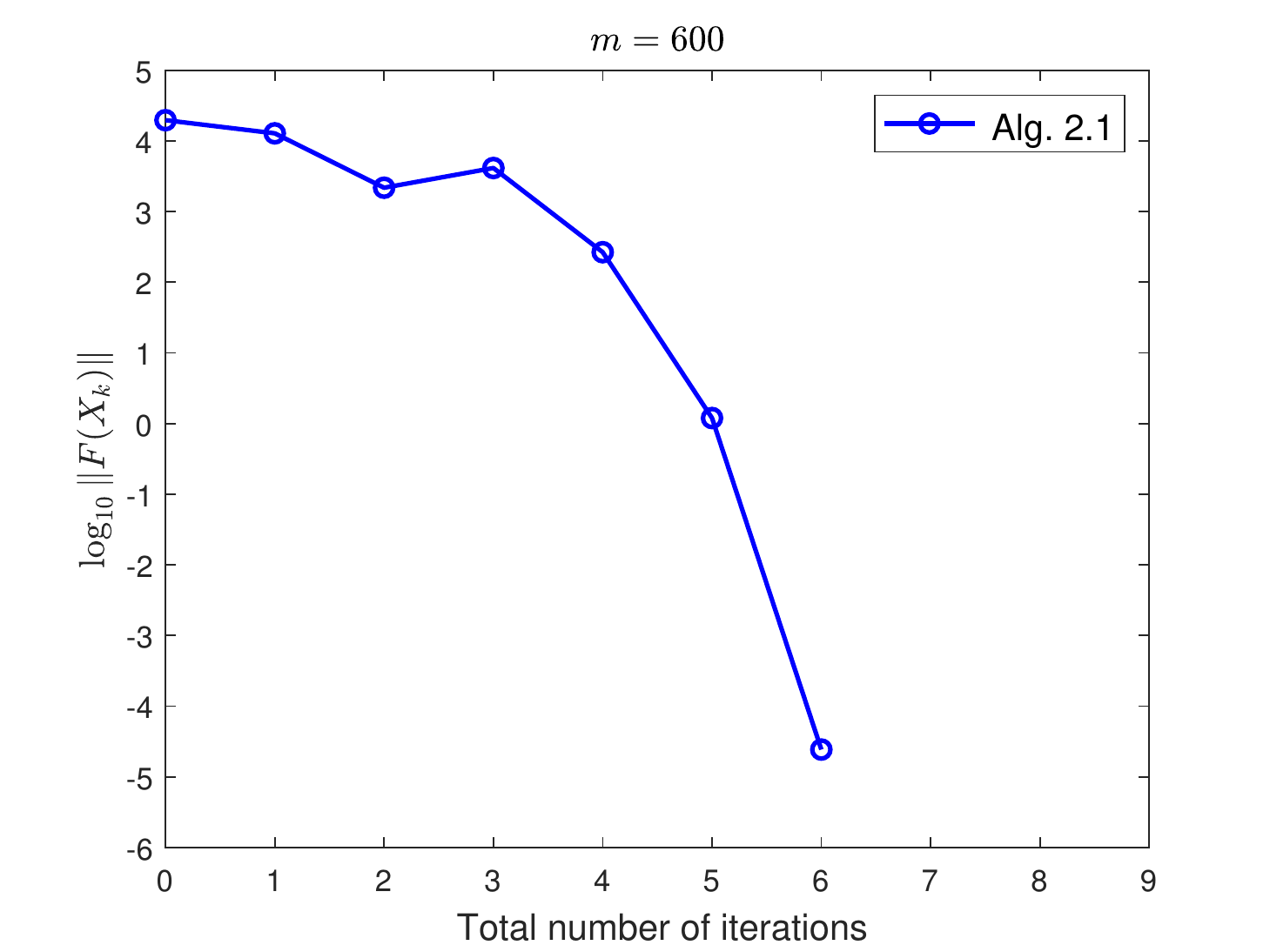,width=6.2cm} & \epsfig{figure=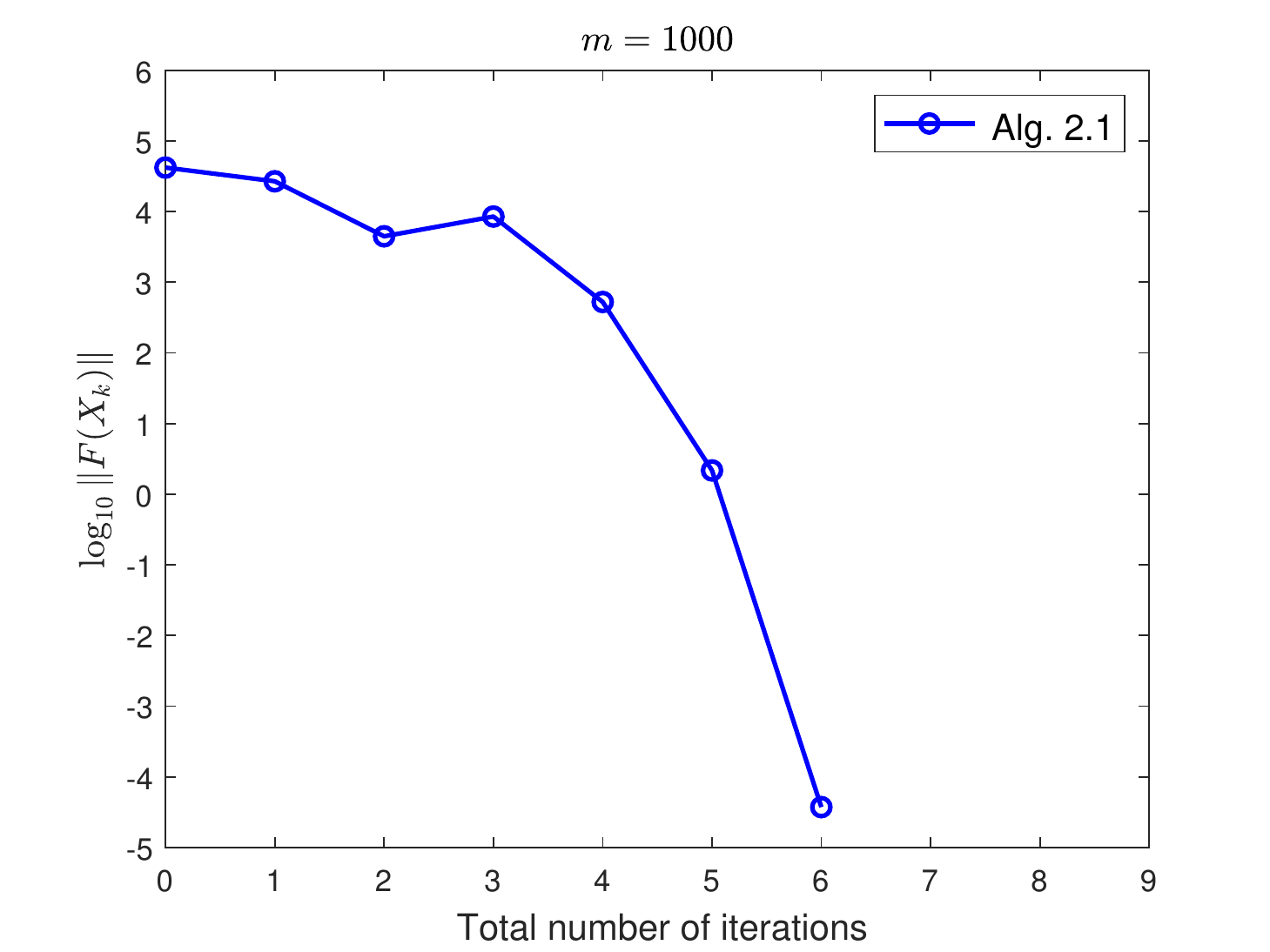,width=6.2cm}
\end{tabular}
\end{center}
 \caption{Convergence history of two tests for Example \ref{ex:3}.}
 \label{f3}
\end{figure}

\section{Hybrid Method}\label{sec5}
\setcounter{table}{0}
\setcounter{figure}{0}

We note that Algorithm \ref{RDF-PRP} is globally convergent. We see from the numerical experiments in section \ref{sec4} that, in general, Algorithm \ref{RDF-PRP} converges at a low or medium  order of accuracy. To improve the efficiency,  one may adopt some hybrid method. A possible strategy is to combine Algorithm \ref{RDF-PRP} with the Riemannian Newton method. As noted in section \ref{sec1},
the Riemannian Newton method may be computationally expensive but has quadratic convergence. In particular, one may use Algorithm \ref{RDF-PRP} to generate an initial point for the Riemannian Newton method with a relatively low accuracy and then switch to the Riemannian Newton method for finding a solution of  high accuracy.
A hybrid algorithm for solving (\ref{prb}) is described as follows.
\begin{algorithm}\label{PRP-Newton}
{\rm (PRP-Newton Method)}
\begin{description}
\item [{\rm Step 0.}]
Choose an initial point $ X_{0} \in \mathcal{M}$,  $0<\zeta_2 < \zeta_1$, and $0<\varsigma<1$, $t_1,t_2>0$, $0<\rho<1$, $0<\lambda_{\min}<\lambda_{\max}<1$, $0<\alpha_{\min}\le\alpha\le\alpha_{\max}$. Let $k:=0$, $\Gamma_{0}:=f(X_{0})$, $\Phi_{0}:=1$.
Select a positive sequence $\{\delta_{k}\}$ such that {\rm (\ref{n})} is satisfied.

\item [{\rm Step 1.}] For $k=1,2,\ldots$, do the RDF-PRP iteration as follows{\rm :}
\begin{itemize}
 \item[{\rm (a).}] Set $\Delta X_k$ to be {\rm (\ref{CONJUG})} where $\beta_k$ and $Y_k$ are given by {\rm (\ref{def:betak})}.
 \item[{\rm (b).}]   Determine $\alpha_k = \max \{ \alpha\rho^j, j = 0,1,2,\ldots \}$ such that if the condition {\rm (\ref{BACKTR1})} is satisfied,
then compute $X_{k+1}$ from {\rm  (\ref{RER1})}; else if the condition {\rm (\ref{BACKTR2})} is satisfied,
then compute $X_{k+1}$ from {\rm  (\ref{RER2})}.
 \item[{\rm (c).}]  Choose $\lambda_{k}\in[\lambda_{\min},\lambda_{\max}]$ and compute $\Phi_{k+1}=\lambda_{k}\Phi_{k}+1$ and $\Gamma_{k+1}$ from  {\rm  (\ref{QC})}.
  \item[{\rm (d).}] Stop if $\|F(X_k)\| < \zeta_1$.
\end{itemize}
\item [{\rm Step 2.}] Set $X_0$ to be the limit point of the RDF-PRP iteration.

\item [{\rm Step 3.}] For $k=1,2,\ldots$, do the Riemannian Newton iteration as follows{\rm :}
\begin{itemize}
\item[{\rm (a).}] Apply the conjugate gradient {\rm (CG)} method {\rm \cite[Algorithm 10.2.1]{GV96}} to solving
               \[
               JF(X_{k-1})[ \Delta X_{k-1} ] = -F(X_{k-1})
               \]
               for $\Delta X_{k-1}\in T_{X_{k-1}}\cm$ such that
               \[
               \|JF(X_{k-1})[ \Delta X_{k-1} ] + F(X_{k-1})\| \leq \varsigma_{k-1} \|F(X_{k-1})\|,
               \]
where $\varsigma_{k-1}:=\min\{\varsigma,\|F(X_{k-1})\|\}$.
\item[{\rm (b).}] Set
               \[
                X_{k} := R_{X_{k-1}}(\Delta X_{k-1}).
               \]

\item[{\rm (c).}] Stop if $\|F(X_k)\| < \zeta_2$.
\end{itemize}
\end{description}
\end{algorithm}

We point out that, in Step 3 of Algorithm \ref{PRP-Newton}, the Riemannian Newton equation is solved inexactly by choosing appropriate value of $\varsigma$.
In addition, different values of $\zeta_1$ lead to different starting points for the Riemannian Newton method.

For demonstration purpose, we use Algorithm \ref{PRP-Newton} to Examples \ref{ex:1}--\ref{ex:2}, i.e., finding zeros of the tangent vector fields defined by (\ref{OJASTIEFEL}) and (\ref{TRACERATIO}).
To develop the Riemannian Newton method, one need to restrict the tangent vector fields in (\ref{OJASTIEFEL}) and (\ref{TRACERATIO}) to the Grassmann manifold ${\rm Grass}(p,m)$ endowed with  the induced Riemannian metric from ${\rm St}(p,m)$.
The restriction $\widehat{F}:{\rm Grass}(p,m) \to T{\rm Grass}(p,m)$ of $F$ defined in (\ref{OJASTIEFEL}) to ${\rm Grass}(p,m)$ is given by
\[
\widehat{F}([X] )=[AX - XX^TAX],
\]
where $[X]:=\{XQ\in\st(p,m) \ | \ Q\in \mathcal{O}(p)\}\in {\rm Grass}(p,m)$ denotes the equivalent class corresponding to a point $X\in {\rm St}(p,m)$.
Given $X\in {\rm St}(p,m)$ and a tangent vector $\xi_{[X]}\in T_{[X]}{\rm Grass}(p,m)$,
let $\overline{\xi_{[X]}}\in \ch_X$ denote the horizontal lift of $\xi_{[X]}\in T_{[X]}{\rm Grass}(p,m)$
at $X\in {\rm St}(p,m)$, where $\ch_X$ denotes the horizontal space at $X\in {\rm St}(p,m)$ \cite[p.757]{ZBJ15}.
The horizontal lift of $J\widehat{F}([X])\big[\xi_{[X]}\big]\in \ch_X$ at $X\in {\rm St}(p,m)$ is denoted by
$\overline{J\widehat{F}([X])\big[\xi_{[X]}\big]}$, which has the following form:
\[
\overline{J\widehat{F}([X])\big[\xi_{[X]}\big]} = (I-XX^T)(A\overline{\xi_{[X]}}-\overline{\xi_{[X]}}X^TAX).
\]

Similarly, the restriction $\widehat{F}:{\rm Grass}(p,m) \to T{\rm Grass}(p,m)$ of $F$ defined in (\ref{TRACERATIO}) to ${\rm Grass}(p,m)$ is given by
\[
\widehat{F}([X])= [E(X)X-X(X^TE(X)X)],\quad \forall [X]\in {\rm Grass}(p,m).
\]
Given a point $X\in {\rm St}(p,m)$ and a tangent vector $\xi_{[X]}\in T_{[X]}{\rm Grass}(p,m)$,
the horizontal lift of $J\widehat{F}([X])\big[\xi_{[X]}\big]\in \ch_X$
at $X\in {\rm St}(p,m)$ is denoted by $\overline{J\widehat{F}([X])\big[\xi_{[X]}\big]}$,
which is given by
\[
\overline{J\widehat{F}([X])\big[\xi_{[X]}\big]} =
(I-XX^T)\big(E(X)\overline{\xi_{[X]}} + G(X,\overline{\xi_{[X]}})X -\overline{\xi_{[X]}}X^TE(X)X\big),
\]
where
\[
G(X,\overline{\xi_{[X]}}):=  A\frac{-\phi'_B(X;\overline{\xi_{[X]}})}{\phi^2_B(X)}
- B\frac{\phi'_A(X;\overline{\xi_{[X]}})\phi_B^2(X)- 2\phi_B(X)\phi_B'(X;\overline{\xi_{[X]}})\phi_A(X)}{\phi_B^4(X)}
\]
and
\[
\phi'_A(X;\overline{\xi_{[X]}}) := 2{\rm tr}(V^TA\overline{\xi_{[X]}}), \qquad
\phi'_B(X;\overline{\xi_{[X]}}) := 2{\rm tr}(V^TB\overline{\xi_{[X]}}).
\]
For the application of Riemannian optimization algorithms on Riemannian quotient manifolds,
one can refer to \cite[p.86 and p.121]{AMS08} and \cite{ZBJ15}.

Next, we consider the application of Algorithm \ref{PRP-Newton} to Examples \ref{ex:1}--\ref{ex:2} for different values of $m$ and $p$.
In our numerical tests,  `{\tt NCG.}' denotes the total number of CG iterations of the Newton step
at the final iterate of Algorithm \ref{PRP-Newton}. In our numerical tests, we set $\varsigma=10^{-8}$, the parameter pairs $(\zeta_1,\zeta_2)$ are set to be
$(10^{-1},10^{-7})$ and $(10^{-3},10^{-7})$, respectively, and the other parameters and the starting points are set as in section \ref{sec4}. For simplicity, two different pairs of $(\zeta_1,\zeta_2)$ are tested.

Table \ref{table6} displays the numerical results for Example \ref{ex:1} with different values of $m$ and $p$. In Figure \ref{f4}, we  give the convergence history of Algorithm \ref{PRP-Newton} for two tests  of Example \ref{ex:1}  with $(m,p)=(2000,30)$ and $(m,p)=(3000,60)$. Figure \ref{f4} depicts the logarithm of the residual versus the number of iterations for finding a zero of the tangent vector field $F$ defined in (\ref{OJASTIEFEL}).
Table \ref{table8} shows the numerical results for Example \ref{ex:2} with different values of $m$ and $p$.  In Figure \ref{f5}, we give  the convergence history of Algorithm \ref{PRP-Newton} for
for two tests of Example \ref{ex:2} with $(m,p)=(1000,30)$ and $(m,p)=(2000,60)$. Figure \ref{f5} depicts the logarithm of the residual versus the number of iterations for finding a zero of the tangent vector field $F$ defined in (\ref{TRACERATIO}).

We observe from Tables \ref{table6}--\ref{table8} and Figures \ref{f4}--\ref{f5} that, by choosing suitable $\zeta_1$, Algorithm \ref{RDF-PRP} may provide a good initial point for the Riemannian Newton method, which give a high accuracy solution. This shows that the proposed hybrid method is very effective  for solving large-scale problems.
\begin{table}[htbp]
  \caption{Numerical results for Example \ref{ex:1}.}\label{table6}
  \begin{center} {\scriptsize
   \begin{tabular}[c]{|l|l|l|r|c|c|c|c|c|}
     \hline
     \multicolumn{9}{|c|}{$p=30$}\\ \hline
  $m$ & $(\zeta_1,\zeta_2)$ & PRP-Newton  & {\tt CT.}  & {\tt IT.} & {\tt NF.} &  {\tt NCG.} & {\tt Res0.} & {\tt Res.} \\  \hline
\multirow{4}{*}{1000} & \multirow{2}{*}{$(10^{-1},10^{-7})$}
              & PRP Step    & 0.0970 s  &   13  &  20  &      & $1.5637$               & $9.2121\times 10^{-2}$   \\
          &   & Newton Step & 3.5740 s  &   12  &  13  & 1073 & $9.2121\times 10^{-2}$ & $1.2530\times 10^{-8}$   \\ \cline{2-9}
              & \multirow{2}{*}{$(10^{-3},10^{-7})$}
              & PRP Step    & 0.4780 s  &   85  &  92  &      & $1.5637 $ & $9.6138\times 10^{-4}$   \\
         &    & Newton Step & 1.3960 s  &    2  &  3   &  425 & $9.6138\times 10^{-4}$  & $4.2673\times 10^{-10}$   \\  \hline
\multirow{4}{*}{2000} & \multirow{2}{*}{$(10^{-1},10^{-7})$}
              & PRP Step    & 0.1720 s  &   13  &  18  &      & $1.5877 $  & $9.8514\times 10^{-2}$   \\
          &   & Newton Step & 15.9840 s &   16  &  17  & 1761 & $9.8514\times 10^{-2}$  & $1.4495\times 10^{-11}$   \\ \cline{2-9}
              & \multirow{2}{*}{$(10^{-3},10^{-7})$}
              & PRP Step    &  1.1880 s &   95  & 100  &       & $1.5877$  & $9.7528\times 10^{-4}$   \\
         &    & Newton Step &  6.5630 s &    2  &   3  &  705  & $9.7528\times 10^{-4}$  & $9.9379\times 10^{-11}$   \\  \hline
\multirow{4}{*}{3000} & \multirow{2}{*}{$(10^{-1},10^{-7})$}
              & PRP Step    & 0.5150 s  &   13  &  22  &       & $1.5635$  & $8.6702\times 10^{-2}$   \\
          &   & Newton Step & 46.9220 s &   22  &  23  &  1967 & $8.6702\times 10^{-2}$  & $3.6051\times 10^{-8}$   \\ \cline{2-9}
              & \multirow{2}{*}{$(10^{-3},10^{-7})$}
              & PRP Step    &  3.3630 s &  114  & 123  &       & $1.5635$  & $9.9517\times 10^{-4}$   \\
         &    & Newton Step & 14.8350 s &    2  &  3   &   621 & $9.9517\times 10^{-4}$  & $5.1732\times 10^{-13}$   \\  \hline
\multirow{4}{*}{4000} & \multirow{2}{*}{$(10^{-1},10^{-7})$}
              & PRP Step    & 0.7770 s  &   13  &  20  &       & $1.5762$  & $8.6654\times 10^{-2}$   \\
          &   & Newton Step &155.6490 s &   38  &  39  &  3940 & $8.6654\times 10^{-2}$  & $2.3522\times 10^{-8}$   \\ \cline{2-9}
              & \multirow{2}{*}{$(10^{-3},10^{-7})$}
              & PRP Step    &  5.3730 s &  114  & 121  &       & $1.5762$  & $9.9009\times 10^{-4}$   \\
         &    & Newton Step & 33.8150 s &    2  &   3  &   859 & $9.9009\times 10^{-4}$  & $1.6149\times 10^{-8}$   \\  \hline
\multirow{4}{*}{5000} & \multirow{2}{*}{$(10^{-1},10^{-7})$}
              & PRP Step    & 1.1880 s  &    13 &  18  &       & $ 1.5813 $  & $9.4306\times 10^{-2}$   \\
          &   & Newton Step & 315.0050 s&    49 &  50  &  5008 & $9.4306\times 10^{-2}$  & $5.9576\times 10^{-8}$   \\ \cline{2-9}
              & \multirow{2}{*}{$(10^{-3},10^{-7})$}
              & PRP Step    &  13.7560 s&   184 & 189  &       & $1.5813$  & $9.8964\times 10^{-4}$   \\
         &    & Newton Step &136.9180 s &     5 &   6  &  2154 & $9.8964\times 10^{-4}$  & $5.2168\times 10^{-11}$   \\  \hline
     \hline
     \multicolumn{9}{|c|}{$m=3000$}\\ \hline
  $p$ & $(\zeta_1,\zeta_2)$ & PRP-Newton  & {\tt CT.}  & {\tt IT.} & {\tt NF.} &  {\tt NCG.} & {\tt Res0.} & {\tt Res.} \\  \hline
\multirow{4}{*}{20} & \multirow{2}{*}{$(10^{-1},10^{-7})$}
              & PRP Step    & 0.3950 s  & 12 &  21 & & $1.2778$  & $8.5232\times 10^{-2}$   \\
          &   & Newton Step & 48.8860 s & 23 &  24 & 2605 & $8.5232\times 10^{-2}$  & $2.9885\times 10^{-11}$   \\ \cline{2-9}
              & \multirow{2}{*}{$(10^{-3},10^{-7})$}
              & PRP Step    &  2.9630 s & 132 &  141&  & $1.2778$  & $9.8691\times 10^{-4}$   \\
         &    & Newton Step &  15.7150 s & 4 &  5 & 838 & $9.8691\times 10^{-4}$  & $1.2869\times 10^{-8}$   \\  \hline
\multirow{4}{*}{40} & \multirow{2}{*}{$(10^{-1},10^{-7})$}
              & PRP Step    & 0.5300 s  & 13 &  18 & & $1.8237$  & $9.9742\times 10^{-2}$   \\
          &   & Newton Step & 104.7220 s & 30 & 31& 3771 & $9.9742\times 10^{-2}$  & $3.3097\times 10^{-9}$   \\ \cline{2-9}
              & \multirow{2}{*}{$(10^{-3},10^{-7})$}
              & PRP Step    &  3.4950 s & 98 &  103 & & $1.8237$  & $9.6445\times 10^{-4}$   \\
         &    & Newton Step &  38.1140 s & 3 &  4 & 1348 & $9.6445\times 10^{-4}$  & $2.0956\times 10^{-10}$   \\  \hline
\multirow{4}{*}{60} & \multirow{2}{*}{$(10^{-1},10^{-7})$}
              & PRP Step    & 0.9530 s  & 15 &  30 & & $2.2435$  & $9.6070\times 10^{-2}$   \\
          &   & Newton Step & 63.3580 s & 15 &  16 & 1778 & $9.6070\times 10^{-2}$  & $6.9081\times 10^{-10}$   \\ \cline{2-9}
              & \multirow{2}{*}{$(10^{-3},10^{-7})$}
              & PRP Step    &  6.4520 s & 121 &  136&  & $2.2435$  & $9.8044\times 10^{-4}$   \\
         &    & Newton Step &  27.8180 s & 2 &  3 & 726 & $9.8044\times 10^{-4}$  & $4.1246\times 10^{-9}$   \\  \hline
\multirow{4}{*}{80} & \multirow{2}{*}{$(10^{-1},10^{-7})$}
              & PRP Step    & 1.5960 s  & 16 &  27&  & $2.5407$  & $9.9187\times 10^{-2}$   \\
          &   & Newton Step &  128.4430 s & 22 &  23 & 2445 & $9.9187\times 10^{-2}$  & $1.6713\times 10^{-9}$   \\ \cline{2-9}
              & \multirow{2}{*}{$(10^{-3},10^{-7})$}
              & PRP Step    & 12.2660 s & 159 &  170&  & $2.5407$  & $9.8479\times 10^{-4}$   \\
         &    & Newton Step & 56.0430 s & 3 &  4& 1062  & $9.8479\times 10^{-4}$  & $1.0519\times 10^{-11}$   \\  \hline
\multirow{4}{*}{100} & \multirow{2}{*}{$(10^{-1},10^{-7})$}
              & PRP Step    & 2.1910 s  & 17 &  30  & & $2.9047$  & $9.6005\times 10^{-2}$   \\
          &   & Newton Step & 251.5150 s & 25 &  26& 3773 & $9.6005\times 10^{-2}$  & $1.9501\times 10^{-11}$   \\ \cline{2-9}
              & \multirow{2}{*}{$(10^{-3},10^{-7})$}
              & PRP Step    &  11.0250 s & 105 &  118 & & $2.9047$  & $9.9363\times 10^{-4}$   \\
         &    & Newton Step &  76.9210 s & 3 &  4& 1148 & $9.9363\times 10^{-4}$  & $4.1883\times 10^{-8}$   \\  \hline
  \end{tabular} }
  \end{center}
\end{table}
\begin{figure}[htbp]
\begin{center}
\begin{tabular}{cc}
\epsfig{figure=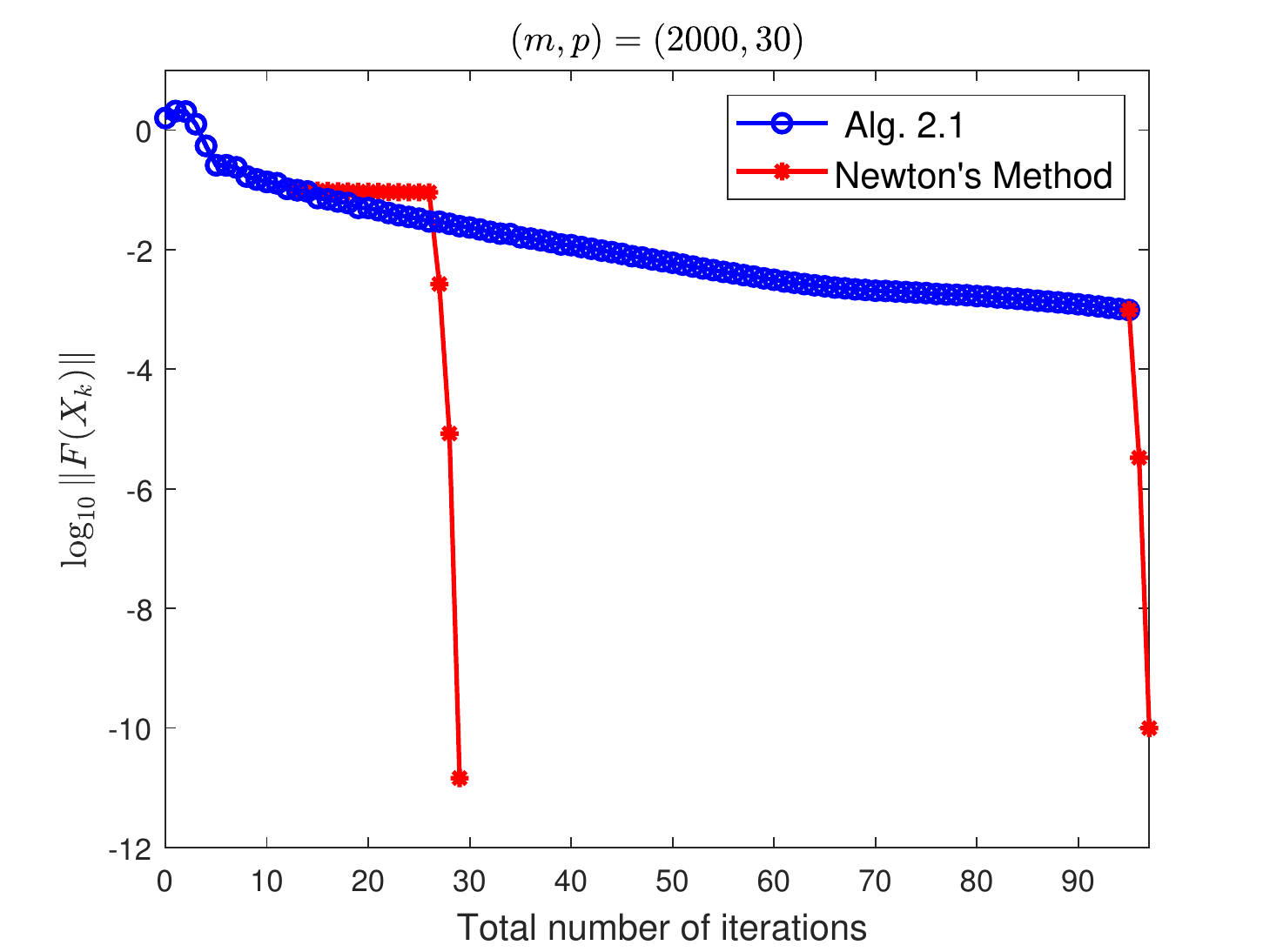,width=6.2cm} & \epsfig{figure=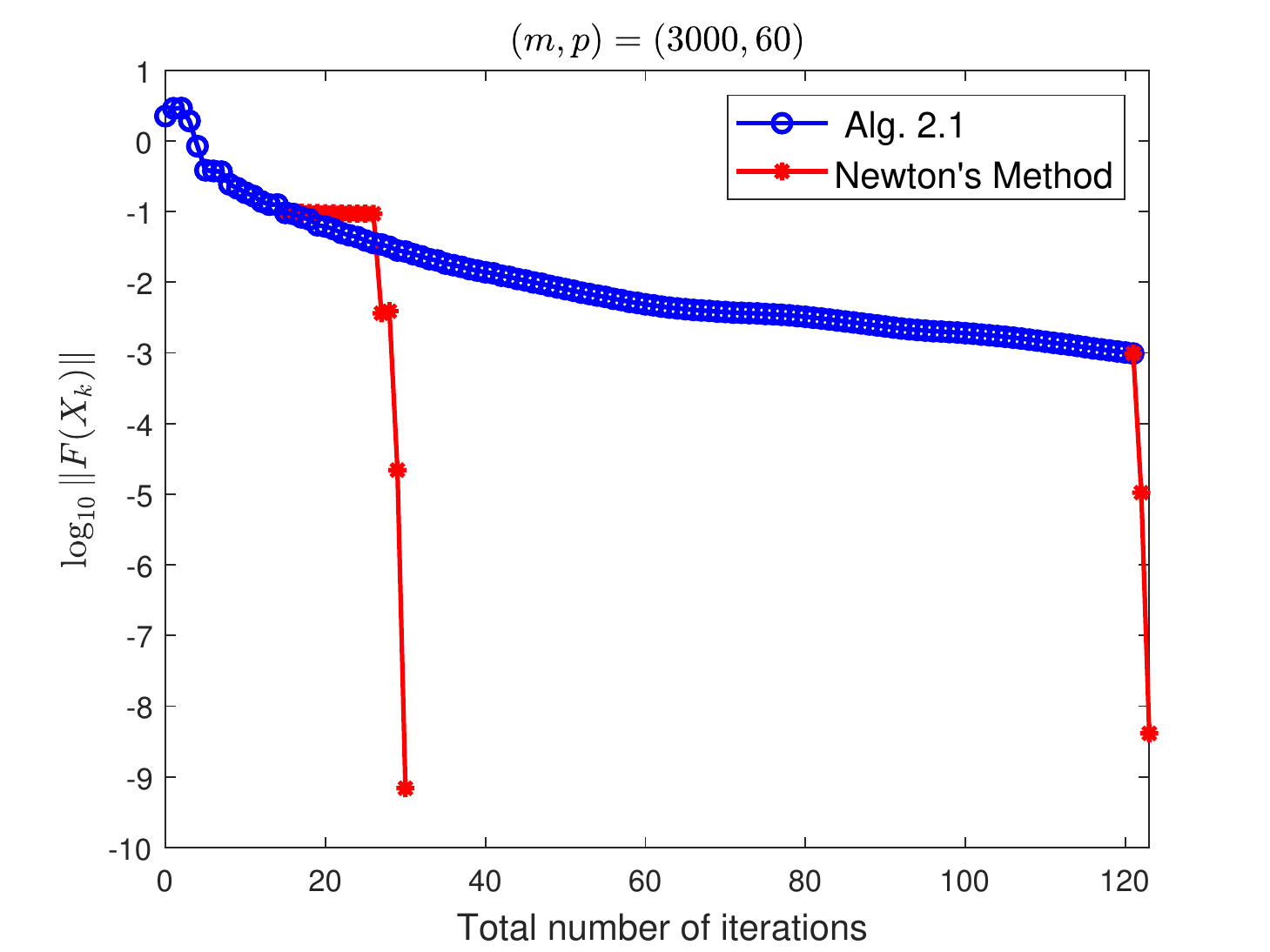,width=6.2cm}
\end{tabular}
\end{center}
 \caption{Convergence history of two tests for Example \ref{ex:1}.}
 \label{f4}
\end{figure}

\begin{table}[htbp]
  \caption{Numerical results for Example \ref{ex:2}.}\label{table8}
  \begin{center} {\scriptsize
   \begin{tabular}[c]{|l|l|l|r|c|c|c|c|c|}
     \hline
     \multicolumn{9}{|c|}{$p=30$}\\ \hline
  $m$ & $(\zeta_1,\zeta_2)$ & PRP-Newton  & {\tt CT.}  & {\tt IT.} & {\tt NF.} &  {\tt NCG.} & {\tt Res0.} & {\tt Res.} \\  \hline
\multirow{4}{*}{1000} & \multirow{2}{*}{$(10^{-1},10^{-7})$}
              & PRP Step    & 1.2650 s  & 81 &  98&  & $1.2016\times 10^{2}$  & $9.8709\times 10^{-2}$   \\
          &   & Newton Step & 4.6090 s & 3 &  4& 629 & $9.8709\times 10^{-2}$  & $4.9052\times 10^{-10}$   \\ \cline{2-9}
              & \multirow{2}{*}{$(10^{-3},10^{-7})$}
              & PRP Step    &  2.5630 s & 161 &  178&  & $1.2016\times 10^{2}$  & $9.5649\times 10^{-4}$   \\
         &    & Newton Step &  1.5150 s & 1 &  2 & 189 & $9.5649\times 10^{-4}$  & $2.0833\times 10^{-10}$   \\  \hline
\multirow{4}{*}{2000} & \multirow{2}{*}{$(10^{-1},10^{-7})$}
              & PRP Step    & 5.9340 s  & 90 &  105 & & $1.7128\times 10^{2}$  & $9.8977\times 10^{-2}$   \\
          &   & Newton Step & 29.3860 s & 3 &  4 & 855 & $9.8977\times 10^{-2}$  & $2.4162\times 10^{-9}$   \\ \cline{2-9}
              & \multirow{2}{*}{$(10^{-3},10^{-7})$}
              & PRP Step    &  10.9610 s & 174 &  189 &  & $1.7128\times 10^{2}$  & $9.8098\times 10^{-4}$   \\
         &    & Newton Step &  9.1750 s & 1 &  2 & 265 & $9.8098\times 10^{-4}$  & $6.9162\times 10^{-10}$   \\  \hline
\multirow{4}{*}{3000} & \multirow{2}{*}{$(10^{-1},10^{-7})$}
              & PRP Step    & 13.8460 s  & 98 &  111 & & $2.0983\times 10^{2}$  & $9.9004\times 10^{-2}$   \\
          &   & Newton Step & 64.1920 s & 3 &  4 & 860 & $9.9004\times 10^{-2}$  & $7.2418\times 10^{-11}$   \\ \cline{2-9}
              & \multirow{2}{*}{$(10^{-3},10^{-7})$}
              & PRP Step    &  26.5980 s & 197 &  210 & & $2.0983\times 10^{2}$  & $9.4199\times 10^{-4}$   \\
         &    & Newton Step &  20.1030 s & 1 &  2 & 260 & $9.4199\times 10^{-4}$  & $8.3461\times 10^{-11}$   \\  \hline
\multirow{4}{*}{4000} & \multirow{2}{*}{$(10^{-1},10^{-7})$}
              & PRP Step    & 34.0490 s  & 138 &  155 & & $2.4355\times 10^{2}$  & $9.9053\times 10^{-2}$   \\
          &   & Newton Step & 110.148 s & 3 &  4 & 850 & $9.9053\times 10^{-2}$  & $1.4005\times 10^{-9}$   \\ \cline{2-9}
              & \multirow{2}{*}{$(10^{-3},10^{-7})$}
              & PRP Step    & 67.5570 s & 289 &  306 & & $2.4355\times 10^{2}$  & $9.7171\times 10^{-4}$   \\
         &    & Newton Step & 34.1720 s & 1 &  2 & 257 & $9.7171\times 10^{-4}$  & $1.3732\times 10^{-10}$   \\  \hline
\multirow{4}{*}{5000} & \multirow{2}{*}{$(10^{-1},10^{-7})$}
              & PRP Step    & 37.3010 s  & 91 &  116 & & $2.7389\times 10^{2}$  & $9.8258\times 10^{-2}$   \\
          &   & Newton Step &  214.2700 s & 3 &  4 & 936 & $9.8258\times 10^{-2}$  & $1.1501\times 10^{-9}$   \\ \cline{2-9}
              & \multirow{2}{*}{$(10^{-3},10^{-7})$}
              & PRP Step    &  75.7800 s & 199 &  224 &  & $2.7389\times 10^{2}$  & $9.6717\times 10^{-4}$   \\
         &    & Newton Step &  54.7750 s & 1 &  2 & 276 & $9.6717\times 10^{-4}$  & $1.9560\times 10^{-11}$   \\  \hline
     \hline
     \multicolumn{9}{|c|}{$m=2000$}\\ \hline
  $p$ & $(\zeta_1,\zeta_2)$ & PRP-Newton  & {\tt CT.}  & {\tt IT.} & {\tt NF.} &  {\tt NCG.} & {\tt Res0.} & {\tt Res.} \\  \hline
\multirow{4}{*}{20} & \multirow{2}{*}{$(10^{-1},10^{-7})$}
              & PRP Step    & 5.8470 s  & 95 &  121 & & $1.4046\times 10^{2}$  & $9.5643\times 10^{-2}$   \\
          &   & Newton Step & 14.6980 s & 2 &  3 & 461 & $9.5643\times 10^{-2}$  & $4.7077\times 10^{-9}$   \\ \cline{2-9}
              & \multirow{2}{*}{$(10^{-3},10^{-7})$}
              & PRP Step    & 14.6830 s & 199 &  225 &  & $1.4046\times 10^{2}$  & $9.3379\times 10^{-4}$   \\
         &    & Newton Step & 9.2240 s & 1 &  2 & 239 & $9.3379\times 10^{-4}$  & $2.2436\times 10^{-10}$   \\  \hline
\multirow{4}{*}{40} & \multirow{2}{*}{$(10^{-1},10^{-7})$}
              & PRP Step    & 7.9220 s  & 109 &  128&  & $1.9775\times 10^{2}$  & $9.9096\times 10^{-2}$   \\
          &   & Newton Step & 65.6650 s & 3 &  4 & 1799 & $9.9096\times 10^{-2}$  & $5.1574\times 10^{-10}$   \\ \cline{2-9}
              & \multirow{2}{*}{$(10^{-3},10^{-7})$}
              & PRP Step    & 16.3930 s & 235 &  254 & & $1.9775\times 10^{2}$  & $9.8092\times 10^{-4}$   \\
         &    & Newton Step & 8.9040 s & 1 &  2 & 240 & $9.8092\times 10^{-4}$  & $1.2536\times 10^{-10}$   \\  \hline
\multirow{4}{*}{60} & \multirow{2}{*}{$(10^{-1},10^{-7})$}
              & PRP Step    &  7.2970 s  &  95 &  114 & & $2.4225\times 10^{2}$  & $9.7040\times 10^{-2}$   \\
          &   & Newton Step & 46.2810 s & 4 &  5 & 1142 & $9.7040\times 10^{-2}$  & $2.5144\times 10^{-10}$   \\ \cline{2-9}
              & \multirow{2}{*}{$(10^{-3},10^{-7})$}
              & PRP Step    &  13.6400 s & 179 &  198 & & $2.4225\times 10^{2}$  & $9.8101\times 10^{-4}$   \\
         &    & Newton Step &   9.8130 s & 1 &  2 & 240 & $9.8101\times 10^{-4}$  & $1.5623\times 10^{-10}$   \\  \hline
\multirow{4}{*}{80} & \multirow{2}{*}{$(10^{-1},10^{-7})$}
              & PRP Step    & 11.2390 s  & 94 &  111& & $2.7770\times 10^{2}$  & $9.9043\times 10^{-2}$   \\
          &   & Newton Step & 50.2640 s & 3 &  4 & 847 & $9.9043\times 10^{-2}$  & $6.0549\times 10^{-10}$   \\ \cline{2-9}
              & \multirow{2}{*}{$(10^{-3},10^{-7})$}
              & PRP Step    & 22.1340 s & 193 &  210 & & $2.7770\times 10^{2}$  & $9.6451\times 10^{-4}$   \\
         &    & Newton Step & 15.2620 s & 1 &  2 & 252  & $9.6451\times 10^{-4}$  & $1.0738\times 10^{-10}$   \\  \hline
\multirow{4}{*}{100} & \multirow{2}{*}{$(10^{-1},10^{-7})$}
              & PRP Step    & 15.1400 s  & 104 &  121 & & $3.0711\times 10^{2}$  & $9.7853\times 10^{-2}$   \\
          &   & Newton Step & 148.2740 s & 6 &  7 & 2104& $9.7853\times 10^{-2}$  & $2.1292\times 10^{-11}$   \\ \cline{2-9}
              & \multirow{2}{*}{$(10^{-3},10^{-7})$}
              & PRP Step    & 28.7670 s & 212 &  229 & & $3.0711\times 10^{2}$  & $9.7389\times 10^{-4}$   \\
         &    & Newton Step & 18.3350 s & 1 &  2 & 257 & $9.7389\times 10^{-4}$  & $4.2447\times 10^{-10}$   \\  \hline
  \end{tabular} }
  \end{center}
\end{table}
\begin{figure}[htbp]
\begin{center}
\begin{tabular}{cc}
\epsfig{figure=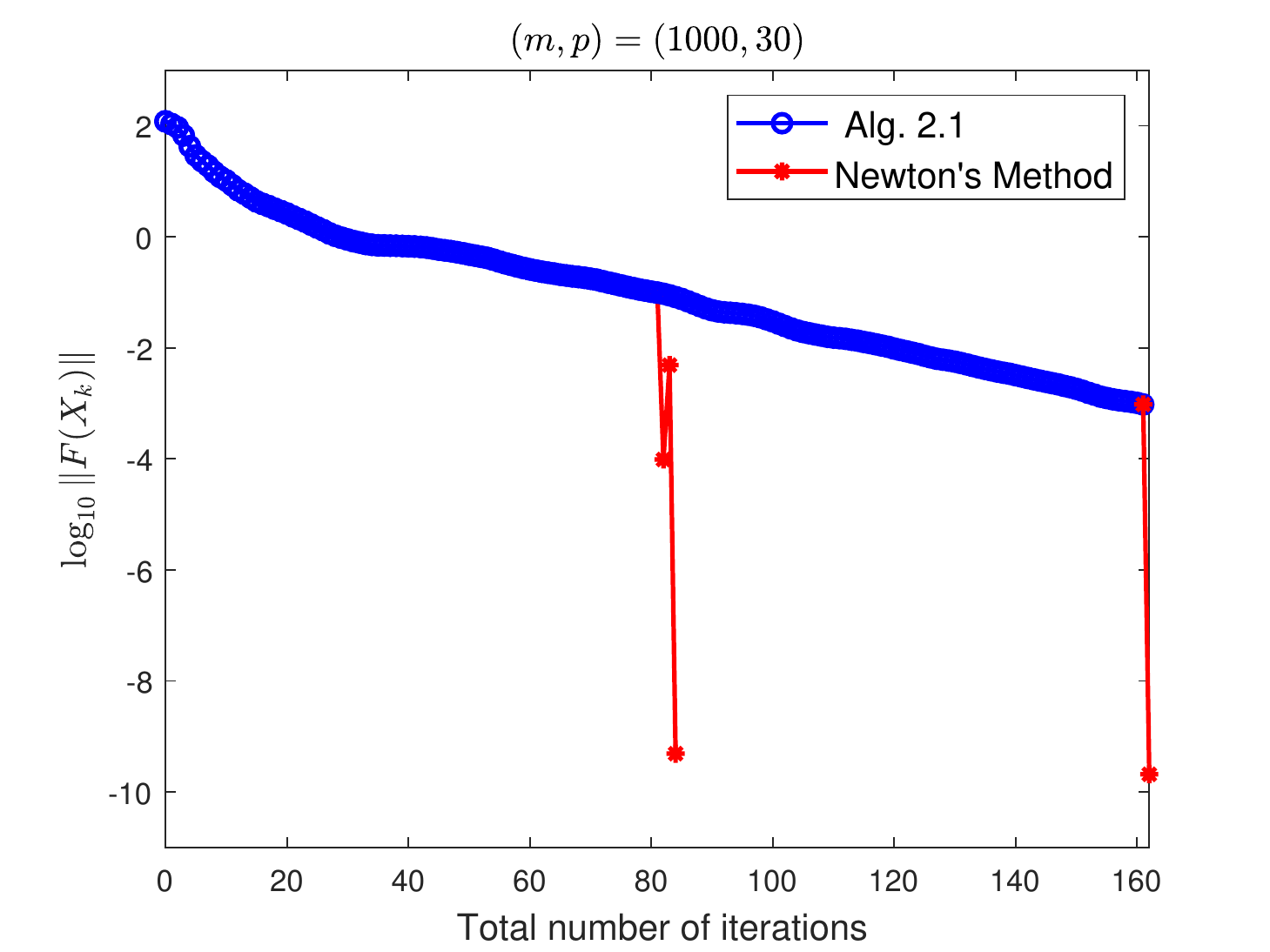,width=6.2cm} & \epsfig{figure=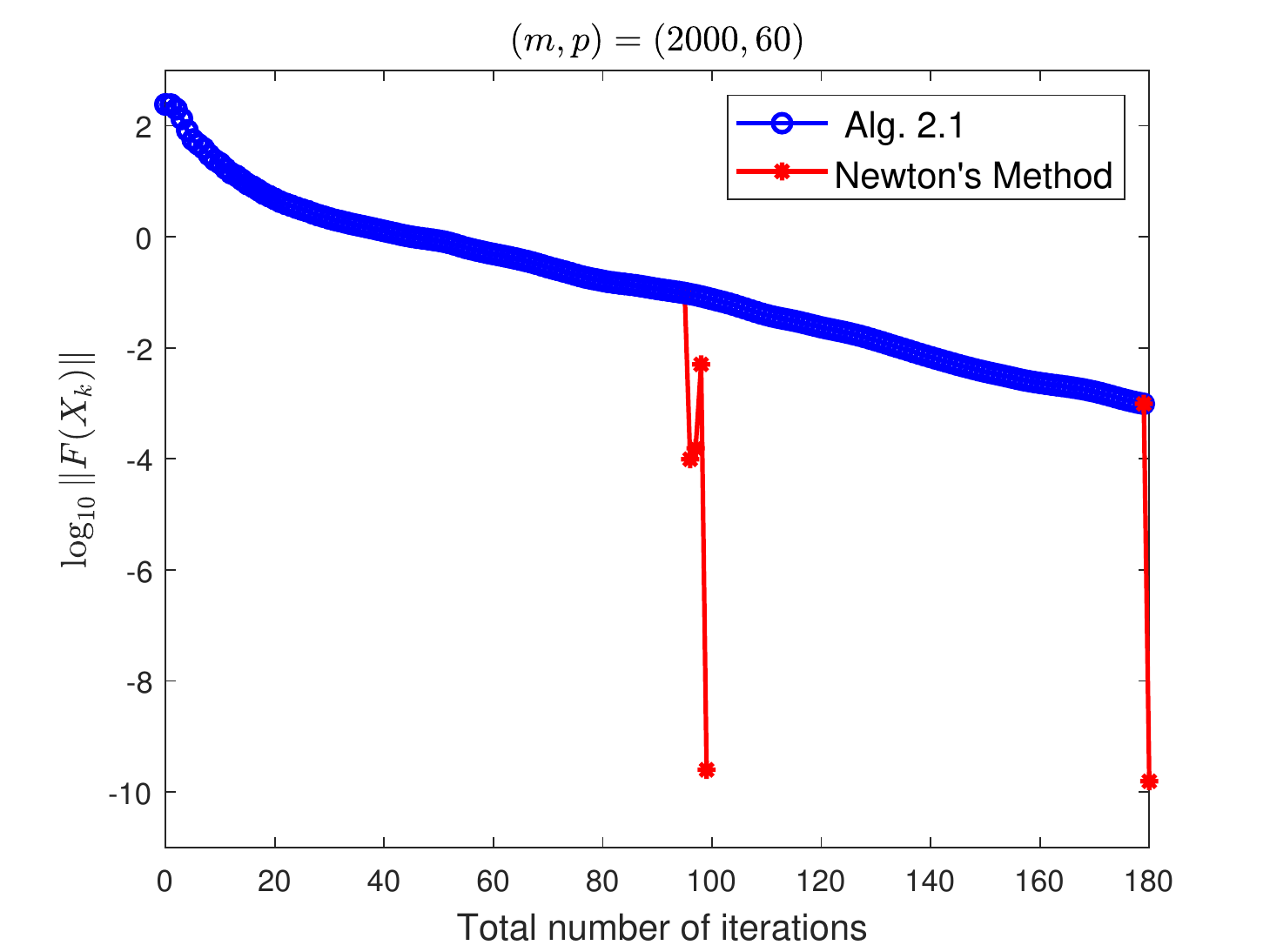,width=6.2cm}
\end{tabular}
\end{center}
 \caption{Convergence history of two tests for Example \ref{ex:2}.}
 \label{f5}
\end{figure}

\section{Conclusions}\label{sec6}
In this paper, we have proposed a Riemannian Derivative-Free PRP Method  for finding a zero of a tangent vector field on a Riemannian manifold.  By using a non-monotone line search, the global convergence of the proposed geometric method is established under some mild conditions.
To further improve the efficiency, we also provide a hybrid method, which combines the proposed
geometric algorithm with the Riemannian Newton method. Numerical tests illustrate the efficiency of the proposed geometric algorithm for large-scale problems. An interesting question is how to choose the stopping tolerance $\zeta_1$ such that the overall computational cost of  Algorithm \ref{PRP-Newton} is minimized, which needs further study.


\end{document}